\newcommand{\ol}{\overline}
\newtheorem{thm}{Theorem}[section]
\newtheorem{cor}[thm]{Corollary}
\newtheorem{lem}[thm]{Lemma}
\theoremstyle{definition}
\newtheorem{example}[thm]{Example}
\newcommand{\N}{\mathbb{N}}
\newcommand{\R}{\mathbb{R}}
\newcommand{\T}{\mathbb{T}}
\newcommand{\Pb}{\mathbb{P}}
\newcommand{\cA}{\mathcal{A}}
\newcommand{\cB}{\mathcal{B}}
\newcommand{\cS}{\mathcal{S}}
\newcommand{\lanran}[1]{\left\langle #1 \right\rangle}
\title[Identities of Tropical Matrices and Plactic Monoids]{Identities of Tropical Matrices and \\ Plactic Monoids}
\begin{document}
\date{\today}
\keywords{identities, varieties, tropical matrices, plactic monoids, upper triangular matrix semigroups}
\maketitle
\begin{center}
THOMAS AIRD\footnote{Email \texttt{Thomas.Aird@manchester.ac.uk}.} \\ \ \\
Department of Mathematics, University of Manchester, \\
Manchester M13 9PL, UK.

\end{center}

\begin{abstract}
We study semigroup varieties generated by full and upper triangular tropical matrix semigroups and the plactic monoid of rank 4. We prove that the upper triangular tropical matrix semigroup $UT_n(\T)$ generates a different semigroup variety for each dimension $n$. We show a weaker version of this fact for the full matrix semigroup: full tropical matrix semigroups of different prime dimensions generate different semigroup varieties. For the plactic monoid of rank 4, $\Pb_4$, we find a new set of identities satisfied by $\Pb_4$ shorter than those previously known, and show that the semigroup variety generated by $\Pb_4$ is strictly contained in the variety generated by $UT_5(\T)$.
\end{abstract}

\par The tropical semiring and matrices over the tropical semiring are of significant interest and have been widely studied in many areas of mathematics (see for example \cite{Con,Geom}). One key use of matrices over the tropical semiring is that they admit faithful representations of semigroups which cannot be faithfully represented by matrices over fields. For example, Izhakian and Margolis showed that the bicyclic monoid has a faithful representation in the semigroup of 2 by 2 upper triangular tropical matrices \cite{2by2}, but it does not have a representation over a field. Similarly, Johnson and Kambites showed that the plactic monoid of rank $n$ can be represented by $2^n$ by $2^n$ upper triangular tropical matrices.

\par In recent years, there has been considerable of interest in the semigroup identities satisfied by tropical matrices. Izhakian \cite{Errata,Tri,New} and Okini\'nski \cite{Jan} showed that for every $n$ the semigroup of $n$ by $n$ triangular tropical matrices satisfies a non-trivial semigroup identity. Building on the work of Shitov \cite{Shitov}, Izhakian and Merlet then showed that for every $n$ the full tropical matrix semigroup of dimension $n$ satisfies a non-trivial semigroup identity \cite{Zur}.
Johnson and Kambites showed that the plactic monoid of rank $n$ satisfies all semigroup identities satisfied by $UT_d(\T)$ where $d$ is the integer part of $\frac{n^2}{4}+1$ and every semigroup identity satisfied by $UT_n(\T)$ is satisfied by $\Pb_n$ \cite{JKPlactic}.

\par In this paper we prove a conjecture posed by Johnson and Kambites. That is, we show that for every positive integer $n$ there is a semigroup identity satisfied by $UT_n(\T)$ but not by $UT_{n+1}(\T)$ \cite[Conjecture 3.5]{JKPlactic}. Moreover, Johnson and Kambites also asked whether the variety generated by $\Pb_4$ is equal to that generated by $UT_4(\T)$ and/or that generated by $UT_5(\T)$ \cite[Question 4.8]{JKPlactic}, and in Section \ref{PlacticSec} we show that $\Pb_4$ satisfies semigroup identities not satisfied by $UT_5(\T)$ and hence, the variety generated by $\Pb_4$ is strictly contained in the variety generated by $UT_5(\T)$. It is known that the variety generated by $UT_2(\T)$ is equal to the variety generated by $\Pb_2$ and similarly  the variety generated by $UT_3(\T)$ is equal to the variety generated by $\Pb_3$. It remains open if the variety generated by $\Pb_4$ is equal to the variety generated by $UT_4(\T)$.

\par In addition to this introduction, this paper comprises 4 sections. In Section 1, we introduce some notations and definitions that we use throughout the rest of the paper.
\par In Section 2 we introduce a necessary requirement for a semigroup identity to be satisfied by the semigroup of $n$ by $n$ upper triangular matrices, $UT_n(\T)$. We then use this to show that for all $n \in \N$ we can construct semigroup identities satisfied by $UT_n(\T)$ but not $UT_{n+1}(\T)$ proving the conjecture given by Johnson and Kambites \cite[Conjecture 3.5]{JKPlactic}.
\par In Section 3, we turn our attention to the full matrix semigroup, $M_n(\T)$. We show that there exists a semigroup identity satisfied by $M_3(\T)$ but not $M_4(\T)$ and more generally show that there exists a semigroup identity satisfied by $M_{p-1}(\T)$ but not $M_p(\T)$ when $p$ is prime. The question of if $M_{p-1}(\T)$ and $M_p(\T)$ generate different varieties for non-prime $p$ remains open.
\par In Section 4, we look at the plactic monoid and find a new set of semigroup identities that is satisfied by $\Pb_4$ but not by $UT_5(\T)$, partially answering the question posed by Johnson and Kambites \cite[Question 4.8]{JKPlactic} by showing that the variety generated by $\Pb_4$ is strictly contained in the variety generated by $UT_5(\T)$. 
\par \textbf{Acknowledgements.} The author thanks Marianne Johnson and Mark Kambites for helpful conversations and comments on the draft.

\section{Preliminaries}
Given an alphabet $\Sigma$, we denote by $\Sigma^*$, the set of all words over $\Sigma$ and by $\Sigma^+$ the set of non-empty words over $\Sigma$. We write $l(w)$ for the \emph{length} of a word $w$. For a word $w$, we write $w_{(i)}$ to denote the $i$th letter of $w$ and write ${|w|}_a$ for the number of times the letter $a \in \Sigma$ appears in $w$. For $u,v \in \Sigma^+$ we say that $u$ is:  a \emph{suffix} of $v$ if there exists $v_1 \in \Sigma^*$ such that $v_1u = v$, a \emph{prefix} of $v$ if there exists $v_2 \in \Sigma^*$ such that $uv_2 = v$, and a \emph{factor} of $v$ if there exists $v_1,v_2 \in \Sigma^*$ such that $v = v_1uv_2$. 
A \emph{semigroup identity} is a pair of (non-empty) distinct words, denoted $``u = v"$. We say that a semigroup $S$ satisfies the identity $u=v$ if every morphism from $\Sigma^+$ to $S$ maps $u$ and $v$ to the same element of $S$.
The variety generated by a semigroup $S$ is the class of all semigroups that satisfy all the semigroup identities satisfied by $S$.
Let $S$ be a semigroup and $w \in \{a,b\}^+$; then for $x,y \in S$, we write $w(x,y)$ to denote the evaluation of $w$ in $S$ obtained by preforming the substitution $a \mapsto x$ and $b \mapsto y$. In the case where $S=\{a,b\}^+$ we write $w[x,y]$, rather than $w(x,y)$, to indicate that $w[x,y]$ is again a word in $\{a,b\}^+$.

\par The \emph{tropical semiring}, denoted $\T$, is the set of real numbers augmented with $-\infty$ defined with two associative binary operations, maximum as its addition and addition as its multiplication. This forms a semiring structure with addition distributing over maximum. As $\T$ is a semiring, for any $n \in \N$, we can construct the (non-commutative when $n > 1$) semiring of $n$ by $n$ matrices over $\T$, which we denote by $M_n(\T)$. However, we will only consider the multiplicative structure of $M_n(\T)$ and therefore consider $M_n(\T)$ to be a semigroup. Similarly we define $UT_n(\T)$, to be the subsemigroup of $M_n(\T)$ of all $n$ by $n$ upper triangular matrices over $\T$ with $-\infty$'s below the diagonal. When writing matrices we will use blank entries for $-\infty$ when it is clearer.
\par For a matrix $A = (A_{ij}) \in M_n(\T)$, we write $G_A = (V,E)$ for the weighted digraph associated to $A$, that is, the digraph with vertex set $V(G_A) = \{1,\dots,n\}$ and edge set $E(G_A)$ containing directed edges $(i,j)$ weighted by $A_{ij}$ if and only if $A_{ij} \neq -\infty$.
Similarly, for $A,B \in M_n(\T)$, we write $G_{A,B}$ for the labelled-weighted digraph with vertex set $\{1,\dots,n\}$ and edge set $E(G_A) \cup E(G_B)$. We call $G_{A,B}$ the compound digraph associated to $A$ and $B$. Moreover, an edge in $G_{A,B}$ is labelled by $A$ (respectively, by $B$) if it came from $E(G_{A})$ (respectively, $E(G_B)$. An edge $(i,j)$ labelled by $A$ (respectively, labelled by $B$) is weighted with the matrix entry $A_{ij}$ (respectively, $B_{ij}$).

\par A path $\gamma$ on a digraph is a series of edges $(i_1,j_1),\dots,(i_m,j_m)$ such that $j_k=i_{k+1}$ for all $1 \leq k < m$. We say $g$ is a \emph{node} of $\gamma$ if an edge starting or ending at $g$ is in $\gamma$, and call an edge a \emph{loop} if it starts and ends at the same node. 
A path $\gamma$ is said to have \emph{length} $m$ if $\gamma$ contains $m$ edges (counted with multiplicity), written $l(\gamma) = m$, and has \emph{simple length} $m$ if $\gamma$ contains $m$ \emph{non-loop} edges (again counted with multiplicity). A path is called \emph{simple} if it does not contain any loops. For any word $w \in \{A,B\}^+$ and $\gamma$ a path in $G_{A,B}$, we say $\gamma$ \emph{is labelled} $w$ if $l(\gamma) = l(w)$ and, for all $1 \leq r \leq l(\gamma)$, the edge $(i_{r}, j_r)$ is labelled $w_{(r)}$, the $r$th letter of $w$.

\par For $n \geq 1$ we define the plactic monoid of rank $n$, $\Pb_n$, to be the monoid generated by the set $\{1,\dots,n\}$ with the Knuth relations:
\begin{align*}
bca &= bac \text{ for } a < b \leq c \\
cab &= acb \text{ for } a \leq b < c.    
\end{align*}
There exist a combinatorial way describe $\Pb_4$ where each element of $\Pb_4$ corresponds to a semi-standard Young tableaux, that is, a Young diagram with numbers less than or equal to $n$ such that the columns are strictly decreasing and the rows are weakly increasing, and multiplication of Young tableau is then given by the \emph{Schensted’s insertion algorithm}. For the interested reader, the authors of \cite{JKPlactic} provide more on this viewpoint.
\section{Upper Triangular Matrix Semigroups}
In this chapter we restrict our attention to the subsemigroup of upper triangular tropical matrices and show that upper triangular tropical matrix semigroups of different dimensions generate different semigroup varieties.
\begin{lem} \label{factor}
Suppose $u,v,w \in \{a,b\}^*$ are words such that $w$ has length $n$ and is a factor of $u$ but not $v$. Then there exists $A,B \in UT_{n+1}(\T)$ such that $u(AB,BA) \neq v(AB,BA)$.
\end{lem}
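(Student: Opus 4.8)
The plan is to turn the statement into a question about paths in the compound digraph $G_{A,B}$ and then build $A$ and $B$ by hand. For any word $z\in\{a,b\}^+$ write $\widehat z$ for the word over $\{A,B\}$ obtained from $z$ by replacing every $a$ by $AB$ and every $b$ by $BA$, so that $\widehat z$ has length $2l(z)$ and $z(AB,BA)$ is precisely the product of matrices labelled $\widehat z$. Consequently the $(1,n+1)$-entry of $z(AB,BA)$ equals the maximum weight of a path from $1$ to $n+1$ in $G_{A,B}$ labelled $\widehat z$, and is $-\infty$ when no such path exists. It therefore suffices to construct $A,B\in UT_{n+1}(\T)$ so that $G_{A,B}$ has a path from vertex $1$ to vertex $n+1$ labelled $\widehat z$ if and only if $w$ is a factor of $z$; applying this with $z=u$ (a path exists) and with $z=v$ (no path exists) then shows that $u(AB,BA)$ and $v(AB,BA)$ differ in their $(1,n+1)$-entry.

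For the construction the idea is a ``recognizer'' for $w$ supported on the $n+1$ vertices $1,\dots,n+1$: put a loop of weight $0$ in both $A$ and $B$ at the vertices $1$ and $n+1$ (so that an arbitrary prefix of $\widehat z$ is absorbed by looping at $1$ and an arbitrary suffix by looping at $n+1$), and between them a staircase $1\to 2\to\cdots\to n+1$ of edges of weight $0$ whose labels, together with a single loop of weight $0$ at each of the vertices $2,\dots,n$, are chosen so that a traversal of the edge $i\to i+1$ forces one to read the two-letter block $AB$ (if $w_{(i)}=a$) or $BA$ (if $w_{(i)}=b$). The easy (``if'') direction is then immediate: if $z=z'wz''$ one loops at vertex $1$ through $\widehat{z'}$, walks the staircase reading $\widehat w$, and loops at vertex $n+1$ through $\widehat{z''}$.

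The substance of the proof, and the step I expect to be the main obstacle, is the converse: that when $w$ is not a factor of $z$ there is no path from $1$ to $n+1$ labelled $\widehat z$. A naive staircase is not good enough — for $w=ab$ it would wrongly admit $\widehat z=BAAB$ (that is, $z=ba$) — because the loops allow mis-aligned or padded readings of $\widehat w$; one must exploit the very restricted shape of words of the form $\widehat z$, namely that they are concatenations of the blocks $AB$ and $BA$ and hence contain no run of three equal letters and have all their ``$AA$''- and ``$BB$''-factors sitting exactly at block boundaries, to see that the only readings reaching vertex $n+1$ come from genuine occurrences of $w$. (For $w=ab$ the correct gadget is simply the test ``$\widehat z$ contains $BB$'', which holds iff $z$ contains $ab$; three vertices, as required.) Carrying this out for all $w$ that contain both letters, and then separately for the single-letter powers $w=a^n$ and $w=b^n$ — where ``$w$ is a factor of $u$'' merely records that $u$ has a run of $n$ consecutive $a$'s while $v$ does not, so that a differently-tuned choice of $A,B$ separating $\widehat u$ from $\widehat v$ is needed — is, I believe, where essentially all of the work lies.
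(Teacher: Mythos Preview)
Your proposal is not yet a proof: you explicitly defer the main step (``carrying this out for all $w$ \dots\ is, I believe, where essentially all of the work lies''), and there is a real reason to doubt it can be completed as stated. Because $A,B\in UT_{n+1}(\T)$, every non-loop edge of $G_{A,B}$ points strictly forward, so the gadget you are building is a \emph{monotone} automaton with no backward (``failure'') transitions. With both loops at $1$ and at $n{+}1$, the set of words admitting a path $1\to n{+}1$ is exactly $\{A,B\}^* L\,\{A,B\}^*$ for the simple language $L$ of paths through the interior; for $n=2$ and a single interior vertex, $L$ has the form $S_1 S_2^* S_3$ with $S_i\subseteq\{A,B\}$. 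One checks that no such $L$ makes ``$\widehat z$ has a factor in $L$'' equivalent to ``$z$ contains $aa$'' (for instance, detecting the factor $ABAB$ picks up $z$ containing $bbb$ as well, and the other choices fare no better). So already at $w=a^2$ your weight-$0$, existence-only scheme with $n{+}1$ vertices appears to fail; the vague ``differently tuned choice'' you invoke for $w=a^n$ is exactly the missing idea.

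The paper avoids this obstacle by discriminating with \emph{weights} rather than with existence, and by working directly in $G_{AB,BA}$ rather than $G_{A,B}$. Take $A$ diagonal with entries $c_1,\dots,c_{n+1}$ where $c_1=0$ and $c_{k+1}=c_k-1$ or $c_k+1$ according as $w_{(k)}=a$ or $b$, and let $B$ have zeros on the superdiagonal, $-c_1,-c_{n+1}$ in the corner diagonal slots, and $-\infty$ elsewhere. Then in $G_{AB,BA}$ the vertices $1$ and $n{+}1$ carry weight-$0$ loops of both labels, and for each $i$ there are two edges $i\to i{+}1$, of weights $c_i$ (labelled $AB$) and $c_{i+1}$ (labelled $BA$); by construction the heavier one is precisely the one whose label matches $w_{(i)}$. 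Hence every path $1\to n{+}1$ has weight at most that of the unique simple path labelled $w$, and $(t(AB,BA))_{1,n+1}$ attains this maximum if and only if $w$ is a factor of $t$ (loop at $1$ through the prefix, traverse $w$, loop at $n{+}1$ through the suffix), and is strictly smaller otherwise. This handles all $w$ uniformly, including the single-letter powers, with no case analysis.
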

\begin{proof}
Let $w \in \{a,b\}^*$ be a word of length $n$. We recursively define $n+1$ parameters $c_1,\cdots,c_{n+1} \in \T,$ using the structure of the word $w$. Let $c_1 = 0$ and for $2 \leq k \leq n+1$ let
\[
c_k =
\begin{cases}
c_{k-1} - 1 \text{ if } w_{(k-1)} = a \\
c_{k-1} + 1 \text{ if } w_{(k-1)} = b.
\end{cases}
\]
\par From these parameters we can then define matrices $A_w,B_w \in UT_{n+1}(\T)$ to be
\[
A_w =
\begin{pmatrix}
c_1 & & & \\
& c_2 & &  \\
& & \ddots & \\
& & & c_{n+1}
\end{pmatrix}
B_w =
\begin{pmatrix}
-c_1 & 0 & & &\\
&  -\infty & \ddots & & \\[0.5ex]
& &  \ddots & \ddots & & \\[1.5ex]
& & &  -\infty & 0 \\[0.5ex]
& & & &  -c_{n+1}
\end{pmatrix}
\]
where $(A_w)_{kk} = c_{k}$ for $1 \leq k \leq n+1 $ and $-\infty$ otherwise; $(B_w)_{11} = -c_1$, $(B_w)_{n+1,n+1} = -c_{n+1}$, $(B_w)_{k,k+1} = 0$ for $1 \leq k \leq n$ and $-\infty$ otherwise. 
\par Letting $A = A_w$ and $B = B_w$. We aim to show that if $w$ is a subword of $u$ but not $v$, then we get that $u(AB,BA) \neq v(AB,BA)$.
Note that $AB$ and $BA$ are given by the following matrices
\[ 
AB =
\begin{pmatrix}
0 & c_1 & & &\\[0.5ex]
& -\infty & c_2 & & \\[0.5ex]
& & \ddots & \ddots & \\[0.5ex]
& & & -\infty & c_{n} \\[0.5ex]
& & & & 0
\end{pmatrix}
BA =
\begin{pmatrix}
0 & c_2 & & &\\[0.5ex]
& -\infty & c_3 & & \\[0.5ex]
& & \ddots & \ddots & \\[0.5ex]
& & & -\infty & c_{n+1} \\[0.5ex]
& & & & 0 
\end{pmatrix}.
\]
\par Consider the labelled-weighted digraph $G_{AB,BA}$;
nodes 1 and $n+1$ each have two loops of weight 0 labelled $AB$ and $BA$ and for each $1 \leq i \leq n$ there are two edges from $i$ to $i+1$ of weight $c_i$ and $c_{i+1}$ labelled $AB$ and $BA$ respectively.
Moreover, we define a function $f_w$ by
\[f_w: \{a,b\}^* \rightarrow \T, \quad t \mapsto t(AB,BA)_{1,n+1}\]
which corresponds to the maximum weight of a path labelled by $t$ from node 1 to $n+1$.
\par By construction, we have that $c_i > c_{i+1}$ if $w_{(i)} = a$ and $c_i < c_{i+1}$ if $w_{(i)}=b$. Hence, the weight of any path from $1$ to $n+1$ is bounded above by the weight of the unique path $\rho$ of length $n$ which takes the edge of largest weight from $i$ to $i+1$ for each $1 \leq i \leq n$. Moreover, $\rho$ is labelled $w$, and hence the upper bound is $f_w(w)$. So for any word $t$, we have that $f_w(t) \leq f_w(w)$. If $t = sws'$ is a word containing $w$ as a factor, a path of maximal weight is labelled $s$ around the loops at 1, $w$ along $\rho$, and $s'$ around the loops at $n+1$, gives a path of weight $f_w(w)$, and hence $f_w(t) = f_w(w)$. On the other hand, if $t$ does not contain $w$ as a factor, then a path from 1 to $n+1$ labelled $t$ cannot contain the simple path $\rho$. It follows that at some step of path we must traverse a non-maximal weight edge between two consecutive nodes. Thus, $f_w(t) < f_w(w)$ in this case.
\par Therefore, $f_w(u) = f_w(w) > f_w(v)$ as $w$ is a factor of $u$ but not $v$. Hence, letting $A= A_w$ and $B= B_w$ we have that there exists $A,B \in UT_{n+1}(\T)$ such that $u(AB,BA) \neq v(AB,BA)$.
\end{proof}
The following corollary is an direct implication of a theorem of Izhakian \cite[Theorem 4.8]{Errata}, by noticing that $AB$ and $BA$ have the same diagonal entries for all $A,B \in UT_n(\T)$. This gives us a way of generating semigroup identities for $UT_n(\T)$.

\begin{cor} \label{Zur}
Let $w \in \{a,b\}^+$ be any word having as its factors all the words of length $n-1$ such that $waw$ and $wbw$ have no letter appearing $n$ times sequentially. Then, the semigroup identity
\[w a w[ab,ba] = w b w[ab,ba]\]
is satisfied by $UT_n(\T)$.
\end{cor}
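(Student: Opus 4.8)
The plan is to reduce the corollary to a statement about pairs of upper triangular tropical matrices having a common diagonal, where it becomes exactly an instance of \cite[Theorem~4.8]{Errata}. To verify that $UT_n(\T)$ satisfies $w a w[ab,ba] = w b w[ab,ba]$ I would fix arbitrary $X,Y\in UT_n(\T)$ and show $waw(XY,YX)=wbw(XY,YX)$; since $ab\mapsto XY$ and $ba\mapsto YX$, this is precisely the evaluation of the two sides of the identity under $a\mapsto X,\ b\mapsto Y$. The point is that $P:=XY$ and $Q:=YX$ have the same diagonal: for upper triangular tropical matrices $P_{ii}=X_{ii}Y_{ii}=Q_{ii}$. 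So the task becomes: \emph{if $P,Q\in UT_n(\T)$ have equal diagonals then $waw(P,Q)=wbw(P,Q)$}, and this is what Izhakian's theorem supplies once its hypotheses are checked against the conditions imposed on $w$.

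To see why those conditions are the right ones I would argue with the weighted digraphs of the proof of Lemma~\ref{factor}. Fix $1\le i\le j\le n$; then $waw(P,Q)_{ij}$ is the maximum weight of a path from $i$ to $j$ in $G_{P,Q}$ of length $2\ell+1$ (with $\ell=l(w)$) labelled $waw$, where an edge labelled $a$ is weighted by the corresponding entry of $P$ and one labelled $b$ by that of $Q$. Because $P,Q$ are upper triangular such a path moves only forwards, so at most $j-i\le n-1$ of its edges are non-loop edges, and its loop edges are weighted by the same value $P_{gg}=Q_{gg}$ regardless of label. Since $waw$ and $wbw$ agree except in position $\ell+1$, the whole statement follows from the claim that $waw(P,Q)_{ij}=\max_{i\le s\le j}\left(w(P,Q)_{is}+P_{ss}+w(P,Q)_{sj}\right)$, the maximum on the right being realised by paths whose $(\ell+1)$-th edge is the loop at $s$: granting this, applying the same claim to $wbw$ (whose $(\ell+1)$-th letter is immaterial, being used only on a loop edge) gives $waw(P,Q)_{ij}=wbw(P,Q)_{ij}$. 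The inequality ``$\ge$'' in the claim is immediate, by concatenating an optimal $i$-to-$s$ path labelled $w$, the loop at $s$, and an optimal $s$-to-$j$ path labelled $w$. For ``$\le$'', take a maximum-weight path labelled $waw$: if its central edge is already a loop we are done, since then its weight is bounded by $w(P,Q)_{is}+P_{ss}+w(P,Q)_{sj}$ for the relevant $s$; otherwise one must relocate its non-loop edges.

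This relocation is where the two hypotheses on $w$ enter, and it is the step I expect to be the main obstacle. Because $w$ contains every word of length $n-1$, hence every word of length at most $n-1$, as a factor, any prescribed pattern of at most $n-1$ non-loop edges can be read off along consecutive positions inside a single copy of $w$; and because neither $waw$ nor $wbw$ has a letter occurring $n$ times in a row, every maximal monochromatic run has length less than $n$, so no run is long enough to pin a non-loop edge to position $\ell+1$. Combining these, one transforms a maximum-weight path into an equally heavy one whose $(\ell+1)$-th edge is a loop, by repeatedly sliding non-loop edges left or right along runs of the appropriate letter — always towards the endpoint of the run carrying the larger diagonal entry, so that the loop-weight accumulated between consecutive non-loop edges never drops — and, when a single slide is blocked, performing a more global rearrangement, available because there are at most $n-1$ non-loop edges while $w$ is long. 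This bookkeeping with the loop-weights $P_{gg}$ is the delicate point; it is carried out in \cite[Theorem~4.8]{Errata}, and together with the diagonal identity $P_{ii}=Q_{ii}$ noted above it yields the corollary.
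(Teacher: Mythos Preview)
Your approach is correct and matches the paper's exactly: the paper's entire argument is the one-line observation that $AB$ and $BA$ have the same diagonal entries for all $A,B\in UT_n(\T)$, followed by an appeal to \cite[Theorem~4.8]{Errata}, which is precisely your first paragraph. Your second and third paragraphs sketch a proof of Izhakian's theorem itself; this is surplus to what the corollary requires (and you ultimately defer the delicate relocation step back to \cite{Errata} anyway), but it does not detract from the correctness of the reduction.
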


\begin{example} \label{example}
For $n=3$, $w=ab^2a^2b$ has all words of length $2$ as a factor, and neither $waw$ nor $wbw$ have $a^3$ or $b^3$ as factors. Therefore, by the above theorem, $waw[ab,ba] = wbw[ab,ba]$ is an identity that holds in $UT_3(\T)$. We will use this example later in the paper.
\end{example}

\begin{thm} \label{utvar}
For all $n \in \N$, there exists an identity satisfied by $UT_n(\T)$ but not satisfied by $UT_{n+1}(\T)$.
\end{thm}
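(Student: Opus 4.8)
The plan is to produce, for each $n$, an identity of the form given by Corollary~\ref{Zur} and to break it in $UT_{n+1}(\T)$ via Lemma~\ref{factor}; the whole problem then comes down to constructing a suitable word $w=w_n\in\{a,b\}^+$. Specifically, I want $w$ to satisfy: \textbf{(i)} every word of length $n-1$ is a factor of $w$; \textbf{(ii)} neither $waw$ nor $wbw$ contains $a^n$ or $b^n$ as a factor; and \textbf{(iii)} some word $z$ of length $n$ is a factor of $waw$ but not of $wbw$. Properties (i) and (ii) are precisely the hypotheses of Corollary~\ref{Zur}, so $u:=waw[ab,ba]$ and $v:=wbw[ab,ba]$ give an identity $u=v$ satisfied by $UT_n(\T)$. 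For the other direction, apply Lemma~\ref{factor} to the pair $waw$, $wbw$ with the length-$n$ witness word $z$ of (iii): this yields $A,B\in UT_{n+1}(\T)$ with $waw(AB,BA)\neq wbw(AB,BA)$, and since evaluating a word $w'[ab,ba]$ under $a\mapsto A$, $b\mapsto B$ is the same as evaluating $w'$ under $a\mapsto AB$, $b\mapsto BA$, the morphism $a\mapsto A$, $b\mapsto B$ witnesses that $u=v$ fails in $UT_{n+1}(\T)$. The cases $n\le 2$, where Corollary~\ref{Zur} is not applicable, are handled by classical facts: $UT_1(\T)$ satisfies $ab=ba$ whereas $UT_2(\T)$ does not, and $UT_2(\T)$ satisfies a known nontrivial identity which Lemma~\ref{factor} shows fails in $UT_3(\T)$; for $n=3$ one may simply take $w=ab^2a^2b$ and $z=aba$ as in Example~\ref{example}.

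To locate the witness $z$ in (iii), note that $waw$ and $wbw$ coincide except for the inserted middle letter, so the length-$n$ factors of $waw$ are exactly the length-$n$ factors of $w$ together with the words $s'\,a\,p'$ where $s'$ is a suffix of $w$, $p'$ is a prefix of $w$, and $|s'|+1+|p'|=n$; the same holds for $wbw$ with $a$ replaced by $b$, and the factors coming from $w$ itself are common to both. Thus it suffices to take $z:=s\,a$ with $s$ the length-$(n-1)$ suffix of $w$ (so that $z$ is automatically a factor of $waw$) and to arrange that $z$ is not a factor of $w$ and that $z\neq s'\,b\,p'$ for all admissible $s',p'$ --- the decomposition with $|s'|=n-1$ being excluded automatically since $sa\neq sb$.

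For the construction of $w_n$ itself I would start from an explicit word realising all $2^{n-1}$ words of length $n-1$ (for definiteness, a de Bruijn word for $\{a,b\}^{n-1}$, or a padded concatenation of all length-$(n-1)$ words), handling (i), and then modify it near its two ends so that $w_n$ begins with $ab$, ends with $b$ with a trailing run of $b$'s of length at most $n-2$, and has its maximal $a$- and $b$-runs (each necessarily of length $n-1$) in the interior; this keeps every maximal run in $w_n$, and in the words created at the junctions of $waw_n$ and $wbw_n$, of length at most $n-1$, giving (ii). Finally I would fix the length-$(n-1)$ prefix and suffix of $w_n$ to be sufficiently ``unrelated'' that the word $z=s\,a$ occurs nowhere in $w_n$ and equals none of the forbidden strings $s'\,b\,p'$, securing (iii).

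The main obstacle is exactly this construction: meeting (i) pushes $w_n$ toward being long and run-heavy, while (ii) and (iii) push the opposite way, so the crux is to verify that one can truncate and pad a de Bruijn word so as to thread all three conditions simultaneously --- in particular a careful case analysis showing that the single length-$n$ word $z=s\,a$ avoids every length-$n$ factor of $wbw_n$ crossing the junction (which, as indicated above, would force a chain of coincidences among a short prefix, a short suffix, and an interior window of $w_n$ that the construction can be set up to rule out). With $w_n$ and its three properties in hand, the deduction of the first paragraph completes the proof.
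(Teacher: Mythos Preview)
Your strategy coincides with the paper's: pair Corollary~\ref{Zur} with Lemma~\ref{factor}, dispatch $n\le 3$ via the commutative identity, the Adjan identity and Example~\ref{example}, and for larger $n$ manufacture a word with your properties (i)--(iii). Your reduction of (iii) to the factors straddling the middle letter is correct, and your witness $z=s\,a$ (suffix times $a$) is the mirror image of the paper's $z=a\cdot p$ with $p$ the length-$(n-1)$ prefix.

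The gap is the one you yourself flag: you never actually construct $w_n$, and that construction is where all the content lies; ``truncate and pad a de~Bruijn word'' followed by ``a careful case analysis'' is a plan, not a proof. The paper avoids both de~Bruijn sequences and any delicate overlap bookkeeping by a device you may want to adopt. It forces the \emph{prefix} of $\overline w$ to be the strictly alternating word $\tilde w=bab\cdots$ of length $n-1$, so that the witness $z:=a\tilde w$ is an \emph{alternating} word of length $n$; the remainder of $\overline w$ is simply the list of all $2^{n-1}$ words of length $n-1$, each padded as $b^2 w_i' a^2$ or $b^2 w_i' b^2$ (after first trimming a $b^2$-prefix or $a^2$/$b^2$-suffix from $w_i$ so that the padding cannot create $a^n$ or $b^n$), with $\overline w$ arranged to begin and end in $ba$. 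Properties (i) and (ii) are then immediate. For (iii) one only has to observe that every length-$n$ window of $\overline w\,b\,\overline w$ either meets one of the $aa$/$bb$ separators, or---if it sits inside a single block or across the middle $b$---starts with $b$ or ends with the wrong letter; in every case it cannot equal the alternating word $a\tilde w$. Choosing the witness to be alternating is precisely what turns your anticipated prefix/suffix case analysis into a one-line structural check.
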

\begin{proof}
As matrix multiplication is commutative if and only if $n=1$, the identity $ab=ba$ is satisfied by $UT_1(\T)$, but not $UT_2(\T)$.
It is known \cite{2by2} that $UT_2(\T)$ satisfies the Adjan identity, $ab^2a^2bab^2a = ab^2aba^2b^2a$. Note that this identity can be written in the form $u[ab,ba] = v[ab,ba]$ where $u = abaab$ and $v=abbab$, and since $a^2$ is a factor of $u$ but not $v$ then the identity is falsified in $UT_3(\T)$ by Lemma~\ref{factor}.
\par For $n=3$, we can see by Example~\ref{example}, that the following identity
\[u_3 :=ab^2a^2b b  ab^2a^2b[ab,ba] = ab^2a^2b a  ab^2a^2b[ab,ba] =: v_3\]
is satisfied by $UT_3(\T)$.
Note that $bab$ is a factor of $u_3$ but not $v_3$. Thus, $u_3 = v_3$ is falsified in $UT_4(\T)$ by Lemma~\ref{factor}.
\par Now let $n \geq 4$ and define $w$ to be the word of length $n$ given by $w = a\tilde{w}$ where
\[ \tilde{w} =
\begin{cases}
b(ab)^\frac{n-2}{2} \text{ if } n \text{ is even,} \\
(ba)^\frac{n-1}{2} \text{ if } n \text { is odd.}
\end{cases}\]
\par We aim to construct a word $\ol{w} \in \{a,b\}^*$ such that for $u =\ol{w}a\ol{w}$ and $v = \ol{w}b\ol{w}$ we have that: $u$ and $v$ do not have any letter appearing $n$ times sequentially, the word $\ol{w}$ contains sufficiently many factors for Theorem~\ref{Zur} to apply, so that the identity $u[ab,ba] = v[ab,ba]$ is satisfied by $UT_n(\T)$, and that the word $w$ is a factor of $u$ but not of $v$, so that $u(AB,BA) \neq v(AB,BA)$ for some $A,B \in UT_{n+1}(\T)$ by Lemma~\ref{factor}.
\par Let $w_{1}, \dots, w_{m}$ be a complete list of words of length $n-1$ taken in some arbitrary but fixed order. We now define $w'_{1}, \dots, w'_{m}$ depending on if $n$ is even or odd in the following way:
\par If $n$ is even, let $w'_{i}$ be equal to the (possibly empty) word obtained from $w_{i}$ by removing the prefix $b^2$ if possible, and the suffix $a^2$ if possible. 
If $n$ is odd, let $w'_{i}$ be equal to the word obtained from $w_{i}$ by removing the prefix $b^2$ if possible, and the suffix $b^2$ if possible.
\par Now, we define 
\[\ol{w} = 
\begin{cases}
\tilde{w}ba(b^2w'_{1}a^2)(b^2w'_{2}a^2) \cdots (b^2w'_{m-1}a^2)(b^2w'_{m}a^2)b^2a \text{ if } n \text{ is even,}\\
\tilde{w}(b^2w'_{1}b^2)a(b^2w'_{2}b^2)\cdots (b^2w'_{m-1}b^2)a(b^2w'_{m}b^2)a \text{ if } n \text{ is odd.}
\end{cases}
\]
By construction, $\ol{w}$ clearly contains each word of length $n-1$ as a factor. Recall that, $\tilde{w}$ is an alternating product of $bab\cdots$, so it does not contain $a^n$ or $b^n$ as a factor. Likewise, by construction each of the bracketed expressions $(b^2w'_{i}a^2)$ and $(b^2w'_{i}b^2)$ do not contain $a^n$ or $b^n$ as a factor as $n \geq 4$. 
Similarly, it can be seen that $\ol{w}$ does not contain $a^n$ or $b^n$. Furthermore, since $\ol{w}$ begins and ends with $ba$, it follows that $u= \ol{w}a\ol{w}$ and $v = \ol{w}b\ol{w}$ do not contain $a^n$ or $b^n$. This shows that Theorem~\ref{Zur} applies, so that $u[ab,ba] = v[ab,ba]$ is satisfied by $UT_n(\T)$.
\par Moreover, we can see that $w$ is a factor of $u = \ol{w}a\ol{w}$ as $a\ol{w} = a\tilde{w}\cdots = w\cdots$ but not a factor of $v$. Thus, by Lemma~\ref{factor}, $u[ab,ba] = v[ab,ba]$ is falsified in $UT_{n+1}(\T)$.
\end{proof}
Another possible approach to Theorem~\ref{utvar} would be to use knowledge about the free objects in these varieties discussed by Kambites in \cite{FreeObjects} to show that the free objects in the varieties generated by $UT_n(\T)$ and $UT_{n+1}(\T)$ are non-isomorphic for all $n \in \N$.

\section{Full Matrix Semigroup}
We introduce the notation that $\ol{n} = \mathrm{lcm}\{1, \dots, n\}$ and write $\lanran{u,v}[x,y]$ for the semigroup identity $u[x,y] = v[x,y]$. Moreover, we say a matrix $A \in M_n(\T)$ has the underlying permutation of $\sigma \in \cS_n$ if $A_{ij} \neq -\infty$ if and only if $j = \sigma(i)$. A matrix is invertible if and only if it has an underlying permutation \cite{Invert}. The following theorem of Izhakian and Merlet allows us to produce semigroup identities for $M_n(\T)$.

\begin{thm}\cite[Theorem 3.6]{Zur} \label{FullIden}
For any $t \geq (n -1)^2 + 1$ and any identity $u = v$ which holds in $M_{n-1}(\T)$, where $u, v \in \{a, b\}^*$, the following holds:
\begin{enumerate}[(i)]
    \item If $q=r$ is an identity which holds in $UT_n(\T)$, then $M_n(\T)$ satisfies the identity
\[ \lanran{ua, va}\left[ (qr)^t\left[a^{\ol{n}},b^{\ol{n}}\right],(qr)^tr\left[a^{\ol{n}},b^{\ol{n}}\right]\right] \]
where $p, q, r \in \{a, b\}^*$.
    \item If $p{q}p = p{r}p$ is an identity which holds $UT_n(\T)$, then $M_n(\T)$ satisfies the identity given by
\[ \lanran{ua, va}\left[ w{q}p\left[a^{\ol{n}},b^{\ol{n}}\right],w{r}p\left[a^{\ol{n}},b^{\ol{n}}\right]\right] \]
where $w = (p{q}p{r}p)^t$ and $p, q, r \in \{a, b\}^*$.
\end{enumerate}
\end{thm}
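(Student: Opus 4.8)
The plan is to fix a single substitution and to reduce the problem to the stabilisation behaviour of powers of a tropical matrix. Fix a morphism $\varphi\colon\{a,b\}^+\to M_n(\T)$, put $X=\varphi(a)$, $Y=\varphi(b)$, and $X_0=X^{\ol n}$, $Y_0=Y^{\ol n}$. In case (i) write $Z=(qr)(X_0,Y_0)$ and $R=r(X_0,Y_0)$, so that $\varphi$ sends the two sides of the asserted identity to $(ua)(Z^t,Z^tR)$ and $(va)(Z^t,Z^tR)$. Since $u=v$ holds in $M_{n-1}(\T)$, which contains a copy of $(\T,+)$ (namely $M_1(\T)$, the tropically scalar matrices), the words $u,v$ have equal content; in particular they are non-empty, and expanding the substitution $a\mapsto Z^t$, $b\mapsto Z^tR$ shows that $(ua)(Z^t,Z^tR)$ is a product of the shape $Z^{a_0}RZ^{a_1}R\cdots RZ^{a_m}$ with each $a_i\ge t$, with $m=|u|_b$ copies of $R$, and with $\sum_i a_i=t(|u|+1)$; likewise for $va$, with the same $m$ and the same exponent sum. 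It therefore suffices to show that such a product does not depend on how the total $Z$-exponent is split among the blocks (nor on whether an $R$-block is the one coming from $q$ or from $r$). Case (ii) is handled identically, with $W=(pqprp)(X_0,Y_0)$ in place of $Z$ and a common right factor $p(X_0,Y_0)$ present in both $P$ and $Q$.

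There is then a clean dichotomy. If both $X$ and $Y$ are invertible, then, since the order of any permutation in $\cS_n$ divides $\ol n=\mathrm{lcm}\{1,\dots,n\}$, the matrices $X_0=X^{\ol n}$ and $Y_0=Y^{\ol n}$ have underlying permutation the identity and are therefore diagonal; then $Z$, $R$, $Z^t$ and $Z^tR$ are all diagonal, the evaluation factors through the commutative semigroup of diagonal matrices, and $(ua)=(va)$ holds there because $u$ and $v$ have equal content. Otherwise at least one of $X,Y$ is non-invertible; since in $M_n(\T)$ the invertible elements are exactly the matrices with an underlying permutation \cite{Invert} and a product is invertible only when every factor is, the core matrix $Z$ (a product involving $X_0$ or $Y_0$) is non-invertible.

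The crux is then the following lemma: if $Z\in M_n(\T)$ is non-invertible and $t\ge(n-1)^2+1$, then for any $R\in M_n(\T)$ and any $m\ge 1$ the product $Z^{a_0}RZ^{a_1}R\cdots RZ^{a_m}$ with all $a_i\ge t$ depends only on $m$ and on $\sum_i a_i$; moreover such products lie in a copy of $M_{n-1}(\T)$ sitting inside $M_n(\T)$ in which, additionally, an $r$-block may be exchanged using the identity $q=r$ (respectively $pqp=prp$) valid in $UT_n(\T)$. The bound $(n-1)^2+1$ is the transient needed for the combinatorial pattern of the powers of $Z$ to stabilise, and the exponent $\ol n$ enters because the cyclicity of each irreducible component of $Z$ --- a weighted digraph on at most $n$ nodes, hence with critical cycles of length at most $n$ --- divides $\ol n$, so that past the transient the only effect of increasing an exponent by $\ol n$ is a uniform additive shift which cancels between the two sides. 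Granting the lemma, $(ua)=(va)$ in $M_{n-1}(\T)$ immediately yields $(ua)(Z^t,Z^tR)=(va)(Z^t,Z^tR)$, and the theorem follows.

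I expect the proof of this lemma to be the main obstacle. Making precise the sense in which a long, doubly flanked power of a non-invertible tropical matrix ``sees only $n-1$ dimensions'', and identifying exactly why the upper-triangular discrepancy between the $q$- and $r$-blocks is absorbed into the part of the product that an $M_{n-1}(\T)$-identity cannot detect, requires the critical-graph (CSR-type) analysis of powers of tropical matrices --- building on Shitov \cite{Shitov} --- together with a careful accounting of which maximal-weight walks survive being extended simultaneously on the left and on the right. This is where essentially all of the technical work lies; everything else is elementary manipulation of words and the observations above about diagonal and invertible matrices.
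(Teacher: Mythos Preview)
The paper does not prove this theorem: it is quoted verbatim as \cite[Theorem~3.6]{Zur} and used as a black box, so there is no in-paper proof to compare your proposal against. Your write-up is therefore not a reconstruction of anything in the present paper but an outline of (part of) the Izhakian--Merlet argument itself.

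For what it is worth, your outline is broadly in the right spirit---the invertible/non-invertible dichotomy, with $X^{\ol n},Y^{\ol n}$ diagonal in the invertible case and a rank-drop/stabilisation argument for high powers of a non-invertible matrix in the other case, is indeed how \cite{Zur} proceeds (building on \cite{Shitov}). Two small points: your claim that $Z$ is non-invertible in the second case needs that the word $qr$ uses both letters, which follows because a non-trivial identity of $UT_n(\T)$ cannot be one-letter (the diagonal copy of $\T$ would falsify it); and your description of where the $UT_n(\T)$ identity $q=r$ is actually applied is vague---in \cite{Zur} it is not that the long product ``lies in a copy of $M_{n-1}(\T)$'', but rather that the factor-rank drop of $Z^t$ lets one rewrite the product through smaller rectangular matrices, and the upper-triangular identity is used on the resulting triangularisable pieces. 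As you yourself say, that lemma is where all the work is, and you have not proved it; so this remains a plan rather than a proof.
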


\begin{thm}
There exists an identity satisfied by $M_3(\T)$ that is not satisfied by $M_4(\T)$.
\end{thm}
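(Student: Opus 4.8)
The plan is to use Theorem~\ref{FullIden} to write down an explicit identity holding in $M_3(\T)$ and then to show it fails in $M_4(\T)$ by evaluating it on one carefully chosen pair of $4\times4$ tropical matrices. The guiding principle is that a $4$-cycle matrix has tropical cyclicity $4$, which divides $\ol 4$ but not $\ol 3=6$, so the stabilisation of $\ol n$-th powers that makes Theorem~\ref{FullIden} work in dimension $3$ degenerates in dimension $4$.

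First I would construct the identity by applying Theorem~\ref{FullIden}(ii) twice. With $n=2$: $M_1(\T)=\T$ is commutative and so satisfies $ab=ba$, while $UT_2(\T)$ satisfies the Adjan identity, which can be written as $p'q'p'=p'r'p'$ with $p'=(ab)[ab,ba]=abba$, $q'=ab$, $r'=ba$; since $\ol 2=2$ and any $t\ge(2-1)^2+1=2$ is allowed, part~(ii) produces an explicit identity $u=v$ with $u,v\in\{a,b\}^*$ satisfied by $M_2(\T)$. With $n=3$: feed this $u=v$ together with the $UT_3(\T)$-identity of Example~\ref{example}, rewritten as $pqp=prp$ with $p=(ab^2a^2b)[ab,ba]$, $q=ab$, $r=ba$, into part~(ii) again; since $\ol 3=6$ and any $t\ge(3-1)^2+1=5$ is allowed, this yields an identity
\[
\Phi:\qquad \lanran{ua,\,va}\left[\,Wqp\left[a^{6},b^{6}\right],\ Wrp\left[a^{6},b^{6}\right]\,\right],\qquad\text{where }W=(pqprp)^{t},
\]
satisfied by $M_3(\T)$. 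By the construction of Example~\ref{example} the ingredient identity $pqp=prp$ is exactly $(ab^2a^2b\,a\,ab^2a^2b)[ab,ba]=(ab^2a^2b\,b\,ab^2a^2b)[ab,ba]$, and $aba$ is a factor of $ab^2a^2b\,a\,ab^2a^2b$ but not of $ab^2a^2b\,b\,ab^2a^2b$; so by Lemma~\ref{factor} this ingredient already fails in $UT_4(\T)$, and it is this failure that I expect to propagate to a failure of $\Phi$ in $M_4(\T)$.

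To falsify $\Phi$ in $M_4(\T)$ I would argue as follows. Theorem~\ref{FullIden} in dimension $n$ rests on the fact (a consequence of the Cyclicity Theorem) that the tropical cyclicity of any $A\in M_n(\T)$ — a least common multiple of cycle lengths of the weighted digraph of $A$, each at most $n$ — divides $\ol n$; hence $A^{\ol n}$ has cyclicity $1$, and its large powers merely rescale by a fixed tropical scalar at each step, which is precisely what lets the exponent $t$ collapse the two sides of $\Phi$ in $M_3(\T)$. In $M_4(\T)$ take $C$ to be a weighted $4$-cycle (underlying permutation $(1\,2\,3\,4)$): its cyclicity is $4\nmid 6=\ol 3$, so $C^{6}=C^{4}\cdot C^{2}$ is a tropical scalar multiple of $C^{2}$ and still has cyclicity $2$. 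One then picks $D\in M_4(\T)$ so that $C^{6}D^{6}$ and $D^{6}C^{6}$ generate a genuinely four-dimensional subsemigroup, substitutes $a\mapsto C$, $b\mapsto D$ into $\Phi$, evaluates the words $Wqp[a^{6},b^{6}]$, $Wrp[a^{6},b^{6}]$ and then $ua$, $va$, and checks that the two sides differ in a suitable matrix entry for a generic choice of the edge-weights. The hard part is this verification, and in particular the choice of $D$: the obvious candidates (for instance $D$ a weighted permutation matrix) force $C^{6}$ and $D^{6}$ into the Klein four-group of double transpositions, so that $C^{6}D^{6}$ splits into two $2\times2$ blocks on each of which the identity $u=v$ is automatically satisfied (it holds in $M_2(\T)$) and $\Phi$ survives; one must therefore produce a $D$ for which the pair of matrices that $\Phi$ finally presents to $u=v$ genuinely violates this $M_2(\T)$-identity. (The same scheme, with a weighted $p$-cycle in place of the $4$-cycle and $\ol{p-1}$ in place of $\ol 3$, is what I would use for the $M_{p-1}(\T)$ versus $M_p(\T)$ statement when $p$ is prime.)

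Finally, one might hope to bypass matrix computations by showing $\Phi$ fails already inside $UT_4(\T)\subseteq M_4(\T)$ via Lemma~\ref{factor}. The obstruction is the substitution $a\mapsto a^{6}$, $b\mapsto b^{6}$: the word $a^{6}b^{6}$ is not of the form $s[ab,ba]$, so $\Phi$ cannot be put into the shape $\hat u[ab,ba]=\hat v[ab,ba]$ to which Lemma~\ref{factor} applies. To make this route work one would have to exhibit sixth roots in $UT_4(\T)$ of the matrices furnished by that lemma — which need not exist for the shift-like matrix $B_w$ appearing in its proof — or else re-run the maximal-weight-path argument of Lemma~\ref{factor} directly for the compound digraph of the evaluated pair.
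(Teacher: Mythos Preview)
Your construction of an identity satisfied by $M_3(\T)$ is correct, but needlessly elaborate: the paper does not apply Theorem~\ref{FullIden} twice. Instead it quotes a known short $M_2(\T)$-identity (from \cite{m2identity}) for the $u=v$ input, and invokes \cite[Remark~3.5]{Zur} to drop the exponent~$t$ entirely when $n=3$. With your double application and $t\ge 2$, $t\ge 5$ the resulting identity has length on the order of~$3\times 10^{5}$; the paper's has length~$5832$. This matters for the second half of the proof.

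The genuine gap is that you never falsify your identity in $M_4(\T)$. Your argument about the cyclicity of a $4$-cycle explains only why the \emph{mechanism} behind Theorem~\ref{FullIden} breaks down in dimension~$4$; it does not show that the \emph{particular} identity~$\Phi$ fails there. You then correctly observe that the obvious companion matrices~$D$ (permutation matrices) force the problem back into two $2\times 2$ blocks where $u=v$ holds, and you leave the existence of a suitable~$D$ as an open task. That is precisely the content that needs to be supplied. The paper closes this gap by brute force: it exhibits explicit $X,Y\in M_4(\T)$ --- with $Y$ a weighted $4$-cycle, so your instinct there is right, but with $X$ a non-permutation, non-invertible matrix that is not at all obvious --- and verifies $s(X,Y)\neq t(X,Y)$ by a GAP computation on the length-$5832$ words. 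No conceptual argument is given (or, apparently, known) for this step; the shortness of the paper's identity is what makes the computation feasible.

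Your final paragraph correctly identifies why the Lemma~\ref{factor} route through $UT_4(\T)$ is blocked, but this only confirms that a direct matrix computation (or some new idea) is unavoidable.
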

\begin{proof} 
We apply Theorem~\ref{FullIden} in the case $n=3$, note that \cite[Remark 3.5]{Zur} allows us to omit the exponent $t$ in this case.
Set $u= a^2b^3a^3babab^3a^2$ and $v = a^2b^3ababa^3b^3a^2$. Then $u=v$ holds in $M_2(\T)$ by \cite{m2identity}.
Now set $p = ab^2a^2b[ab,ba]$, $q=ab$ and $r=ba$.
Then $pqp = prp$ holds in $UT_3(\T)$ by Example~\ref{example}. Note that $\ol{n} = 6$ when $n = 3$, Theorem~\ref{FullIden}(ii) now yields the identity of length 5832 satisfied by $M_3(\T)$
\[ \lanran{s,t} :=\lanran{ua, va}\left[ wqp\left[a^6,b^6\right],wrp\left[a^6,b^6\right]\right]  \]
where $w = p{q}p{r}p$.

Now, let $X,Y \in M_4(\T)$ be given by
\[X =
\begin{pmatrix}
2 & -\infty & -\infty & -\infty \\
-\infty & 4 & -\infty & -\infty \\
3 & -\infty & -\infty & -\infty \\
-\infty & -\infty & 4 & 0   
\end{pmatrix} \quad
Y =
\begin{pmatrix}
-\infty & 0 & -\infty & -\infty \\
-\infty & -\infty & 1 & -\infty \\
-\infty & -\infty & -\infty & 1 \\
1 & -\infty & -\infty & -\infty 
\end{pmatrix}
\]
Then, a computation (run on the GAP computer algebra system \cite{gap}) gives $s(X,Y) \neq t(X,Y)$ and hence we have constructed an identity satisfied by $M_3(\T)$ but not by $M_4(\T)$.
\end{proof}

\begin{lem} \label{false}
Let $n \geq 3$ be odd, and $A,B \in M_n(\T)$ be invertible matrices such that $A$ has the underlying permutation of an $n$-cycle and $B$ is diagonal and not a scaling of the identity matrix. Then, there exists an identity satisfied by $M_2(\T)$, $u_2=v_2$, such that $u_2(A,B) \neq v_2(A,B)$.
\end{lem}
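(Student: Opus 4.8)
The plan is to falsify a \emph{concrete} known identity of $M_2(\T)$ at the pair $(A,B)$ by a direct path analysis in the compound digraph $G_{A,B}$. I would take the identity $u_2=v_2$ with $u_2 = a^2b^3a^3babab^3a^2$ and $v_2 = a^2b^3ababa^3b^3a^2$, which holds in $M_2(\T)$ by \cite{m2identity} (the same identity used just above), and evaluate both words under $a\mapsto A$, $b\mapsto B$.

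First I would record the following rigidity of $G_{A,B}$. Let $\sigma\in\cS_n$ be the (cyclic) underlying permutation of $A$, write $\alpha_j = A_{j,\sigma(j)}$ and $\beta_j = B_{jj}$ (all real, since $A,B$ are invertible), and for $w\in\{a,b\}^+$ let $n_t(w)$ denote the number of occurrences of $b$ in $w$ preceded by exactly $t$ occurrences of $a$. Because $B$ is diagonal, the only edge of $G_{A,B}$ leaving a vertex $g$ labelled $B$ is the loop at $g$, and the only one labelled $A$ is $g\to\sigma(g)$; hence from a fixed start vertex $i$ there is a \emph{unique} path labelled $w$, ending at $\sigma^{|w|_a}(i)$. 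Consequently $w(A,B)$ is invertible with underlying permutation $\sigma^{|w|_a}$, and its unique entry $\ne-\infty$ in row $i$ equals $\sum_{t=0}^{|w|_a-1}\alpha_{\sigma^t(i)} \;+\; \sum_{t\ge 0} n_t(w)\,\beta_{\sigma^t(i)}$.

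Applying this to $u_2$ and $v_2$: both have $|u_2|_a = |v_2|_a = 9$, so $u_2(A,B)$ and $v_2(A,B)$ share the same underlying permutation and the same $\alpha$-part, and therefore coincide if and only if $\sum_t\big(n_t(u_2)-n_t(v_2)\big)\beta_{\sigma^t(i)} = 0$ for every $i$. A routine tally of the two $b$-profiles gives $n_t(u_2)-n_t(v_2) = -1$ for $t\in\{3,4\}$, $+1$ for $t\in\{5,6\}$, and $0$ otherwise. Relabelling the vertices so that $\sigma$ is the shift $j\mapsto j+1$ modulo $n$, equality of the two evaluations would force $\beta_j+\beta_{j+1} = \beta_{j+2}+\beta_{j+3}$ for all $j$, i.e. the sequence $j\mapsto \beta_j+\beta_{j+1}$ would be $2$-periodic. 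Since $n$ is odd, $2$ is invertible modulo $n$, so this sequence would be constant, whence $\beta_{j+2}=\beta_j$ for all $j$ and, using oddness once more, $\beta$ itself would be constant — contradicting that $B$ is not a scaling of the identity. Hence $u_2(A,B)\ne v_2(A,B)$.

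I expect the only technical content to be the entry formula for $w(A,B)$ (immediate once the rigidity of $G_{A,B}$ is noted) and the bookkeeping of $n_t(u_2)$, $n_t(v_2)$; the real point is merely that equality of the two evaluations reduces to a periodicity constraint on the diagonal of $B$ which oddness of $n$ rules out. It is worth flagging that oddness is genuinely used: for even $n$ one could take $B = \mathrm{diag}(0,1,0,1,\dots)$, which is not a scaling of the identity yet makes the displayed relation hold, so this particular identity does not settle the even case and a separate argument would be required there.
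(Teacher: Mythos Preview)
Your argument is correct, and it proceeds along a genuinely different line from the paper's.  You take the Daviaud--Johnson identity $u_2=a^2b^3a^3babab^3a^2$, $v_2=a^2b^3ababa^3b^3a^2$ (already invoked in the paper for the $M_3$ versus $M_4$ theorem), compute the unique row-$i$ entry of $w(A,B)$ via the unique path in $G_{A,B}$ labelled $w$, and reduce $u_2(A,B)=v_2(A,B)$ to the recurrence $\beta_j+\beta_{j+1}=\beta_{j+2}+\beta_{j+3}$ on $\mathbb{Z}/n\mathbb{Z}$; oddness of $n$ then collapses this $2$-periodicity to $\beta$ constant.  The paper instead selects a different $M_2(\T)$-identity, $u_2=a^2b^4a^2\cdot a^2b^2\cdot a^2b^4a^2$ and $v_2=a^2b^4a^2\cdot b^2a^2\cdot a^2b^4a^2$ from \cite{2by2}, deliberately built as a sandwich so that cancellativity of invertible tropical matrices reduces the question at once to $A^2B^2=B^2A^2$; comparing the sole finite entries in row $i$ gives $B_{ii}=B_{\sigma^2(i),\sigma^2(i)}$, and since $n$ is odd $\sigma^2$ is again an $n$-cycle, forcing $B$ to be a scaling of the identity.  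Both routes hinge on oddness to kill a $2$-periodicity, but the paper's choice of identity trades your $n_t$-bookkeeping for a one-line cancellation; conversely, your version reuses an identity already present in the paper and records a general entry formula for $w(A,B)$ that could be handy elsewhere.  Your closing remark on the even case (e.g.\ $B=\mathrm{diag}(0,1,0,1,\dots)$) applies equally to the paper's identity, since $A^2B^2=B^2A^2$ for that $B$ as well.
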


\begin{proof}
Let $u_2= a^2 b^4a^2\ a^2b^2\ a^2b^4a^2$, $v_2 = a^2 b^4a^2\ b^2a^2\ a^2 b^4a^2$. Then $u_2 = v_2$ is an identity satisfied in $M_2(\T)$ by \cite[Theorem 3.9]{2by2}.
Now, let $A,B \in M_n(\T)$ be such that $A$ is has the underlying permutation of an $n$-cycle $\sigma$ and $B$ is a diagonal matrix. As $A$ and $B$ are invertible matrices, they are cancellative and hence $u_2(A,B) = v_2(A,B)$ if and only if $A^2B^2 = B^2A^2$, by cancelling $A^2B^4A^2$ from both sides of $u_2(A,B)$ and $v_2(A,B)$. However, 
\begin{align*}
(A^2B^2)_{i,\sigma^2(i)} &=
A_{i\sigma(i)}A_{\sigma(i)\sigma^2(i)}B^2_{\sigma^2(i),\sigma^2(i)}, \\
(B^2A^2)_{i,\sigma^2(i)} &= 
B_{ii}^2A_{i\sigma(i)}A_{\sigma(i)\sigma^2(i)}.
\end{align*}
Moreover, as $(A^2B^2)_{ij} = -\infty = (B^2A^2)_{ij}$ if $j \neq \sigma^2(i)$, we get that $A^2B^2 = B^2A^2$ if and only if $B_{ii} = B_{\sigma^2(i),\sigma^2(i)}$ for all $1 \leq i \leq n$. That is, as $\sigma$ is an $n$-cycle and $n$ is odd, if and only if $B$ is a scaling of the identity matrix. Therefore, $u_2(A,B) \neq v_2(A,B)$ if $B$ is a diagonal matrix and not a scaling of the identity matrix.
\end{proof}

\begin{lem} \label{induct}
For each $k$ in the range $3 \leq k \leq n$, let $q_k = r_k$ be an identity satisfied by $UT_k(\T)$ over $\{a,b\}$, and let $t_k$ be a fixed integer with $t_k \geq (k-1)^2 + 1$. Let $A_n = a$, $B_n = b$, and for $k=n, \ldots, 3$ recursively define
\[ A_{k-1} = (q_kr_k)^{t_k}\left[A_k^{\ol{k}},B_k^{\ol{k}}\right] \text{ and } B_{k-1} =(q_kr_k)^{t_k}r_k\left[A_k^{\ol{k}},B_k^{\ol{k}}\right]. \]
Then, for any identity satisfied by $M_2(\T)$, $u_2=v_2$, we have that
\[u_2[A_2,B_2]A_2A_3\cdots A_{n-1} = v_2[A_2,B_2]A_2A_3 \cdots A_{n-1}.\]
is an identity satisfied by $M_n(\T)$.
\end{lem}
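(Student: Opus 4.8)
The plan is to induct on $n$, using Theorem~\ref{FullIden}(i) once per level to pass from $M_{k-1}(\T)$ to $M_k(\T)$. It is convenient to record the dependence of the recursively defined words on $n$: write $A_k^{[n]},B_k^{[n]}$ (for $2\le k\le n$) for the words in $\{a,b\}^*$ produced by the recursion in the statement, so that the words $A_k,B_k$ of the lemma are precisely $A_k^{[n]},B_k^{[n]}$ for the value of $n$ under consideration. Note also that the trailing product $A_2^{[n]}A_3^{[n]}\cdots A_{n-1}^{[n]}$ degenerates to the single word $A_2^{[n]}$ when $n=3$.

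For the base case $n=3$ we have $A_2^{[3]}=(q_3r_3)^{t_3}[a^{\ol 3},b^{\ol 3}]$ and $B_2^{[3]}=(q_3r_3)^{t_3}r_3[a^{\ol 3},b^{\ol 3}]$, since $A_3^{[3]}=a$ and $B_3^{[3]}=b$. Applying Theorem~\ref{FullIden}(i) with $n=3$, the $M_2(\T)$-identity $u_2=v_2$, the $UT_3(\T)$-identity $q_3=r_3$, and $t=t_3\ge (3-1)^2+1$ shows that $M_3(\T)$ satisfies $\lanran{u_2a,v_2a}[A_2^{[3]},B_2^{[3]}]$, that is, $u_2[A_2^{[3]},B_2^{[3]}]A_2^{[3]}=v_2[A_2^{[3]},B_2^{[3]}]A_2^{[3]}$, which is exactly the asserted identity for $n=3$.

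For the inductive step, first establish the bridge between the level-$(n-1)$ and level-$n$ words. Let $\psi_n\colon\{a,b\}^*\to\{a,b\}^*$ be the morphism with $\psi_n(a)=A_{n-1}^{[n]}$ and $\psi_n(b)=B_{n-1}^{[n]}$. By downward induction on $k$ from $n-1$ to $2$ one checks that $A_k^{[n]}=\psi_n\!\left(A_k^{[n-1]}\right)$ and $B_k^{[n]}=\psi_n\!\left(B_k^{[n-1]}\right)$: the case $k=n-1$ is immediate since $A_{n-1}^{[n-1]}=a$ and $B_{n-1}^{[n-1]}=b$, and the inductive step follows because a morphism commutes both with forming $\ol k$-th powers of letters and with the substitution operation $[\,\cdot\,,\,\cdot\,]$, so that applying $\psi_n$ to $(q_kr_k)^{t_k}\bigl[(A_k^{[n-1]})^{\ol k},(B_k^{[n-1]})^{\ol k}\bigr]$ yields $(q_kr_k)^{t_k}\bigl[(A_k^{[n]})^{\ol k},(B_k^{[n]})^{\ol k}\bigr]=A_{k-1}^{[n]}$. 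Now assume the lemma for $n-1$; then for any $M_2(\T)$-identity $u_2=v_2$ the words $U:=u_2[A_2^{[n-1]},B_2^{[n-1]}]A_2^{[n-1]}\cdots A_{n-2}^{[n-1]}$ and $V:=v_2[A_2^{[n-1]},B_2^{[n-1]}]A_2^{[n-1]}\cdots A_{n-2}^{[n-1]}$ satisfy $U=V$ in $M_{n-1}(\T)$. Feeding this into Theorem~\ref{FullIden}(i) at level $n$, with the $UT_n(\T)$-identity $q_n=r_n$ and $t=t_n\ge (n-1)^2+1$, shows that $M_n(\T)$ satisfies $\lanran{Ua,Va}[X,Y]$, where $X=(q_nr_n)^{t_n}[a^{\ol n},b^{\ol n}]=A_{n-1}^{[n]}$ and $Y=(q_nr_n)^{t_n}r_n[a^{\ol n},b^{\ol n}]=B_{n-1}^{[n]}$. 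Since substituting $a\mapsto X$, $b\mapsto Y$ into a word $W$ is exactly $\psi_n(W)$, this identity reads $\psi_n(U)\,A_{n-1}^{[n]}=\psi_n(V)\,A_{n-1}^{[n]}$; expanding $\psi_n(U)$ via the morphism property and the bridge just proved gives $\psi_n(U)=u_2[A_2^{[n]},B_2^{[n]}]A_2^{[n]}\cdots A_{n-2}^{[n]}$, and likewise for $V$, so the identity becomes $u_2[A_2^{[n]},B_2^{[n]}]A_2^{[n]}\cdots A_{n-1}^{[n]}=v_2[A_2^{[n]},B_2^{[n]}]A_2^{[n]}\cdots A_{n-1}^{[n]}$, which is the claim for $n$.

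The substantive content sits entirely inside Theorem~\ref{FullIden}(i); the lemma is a clean iteration of it, so I do not expect a genuinely hard step. The main things to get right are the bookkeeping — matching, at each level, the words produced by Theorem~\ref{FullIden}(i) with $A_{n-1}^{[n]},B_{n-1}^{[n]}$, and observing that the lemma's hypothesis $t_k\ge (k-1)^2+1$ is precisely the requirement of Theorem~\ref{FullIden} — together with the auxiliary identity $A_k^{[n]}=\psi_n(A_k^{[n-1]})$, which is what allows an identity of $M_{n-1}(\T)$ coming from the induction hypothesis to be transported forward under $\psi_n$ into the desired form. One should also note the degenerate behaviour of the trailing product $A_2\cdots A_{n-1}$ at $n=3$ so that the base case lines up with the general statement.
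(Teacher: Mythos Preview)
Your proof is correct. Both your argument and the paper's reduce the lemma to iterated applications of Theorem~\ref{FullIden}(i), but they are organised differently. The paper keeps $n$ fixed throughout: for each $3\le k\le n$ it defines $u_k=(u_{k-1}a)\bigl[(q_kr_k)^{t_k}[a^{\ol k},b^{\ol k}],(q_kr_k)^{t_k}r_k[a^{\ol k},b^{\ol k}]\bigr]$ (and $v_k$ analogously), then checks the telescoping word equality
\[
u_{k-1}[A_{k-1},B_{k-1}]A_{k-1}A_k\cdots A_{n-1}=u_k[A_k,B_k]A_k\cdots A_{n-1},
\]
chaining these to obtain $u_2[A_2,B_2]A_2\cdots A_{n-1}=u_n[A_n,B_n]=u_n$, which equals $v_n$ in $M_n(\T)$ by construction. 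You instead induct on $n$, introducing the morphism $\psi_n$ and the superscript bookkeeping $A_k^{[n]}$ to transport the level-$(n-1)$ identity forward; your bridge identity $A_k^{[n]}=\psi_n(A_k^{[n-1]})$ is precisely what the paper's telescoping step encodes implicitly. The paper's route is a little leaner since $n$ never changes and no extra decorations are needed, while yours makes the underlying substitution morphism explicit; mathematically neither gains anything over the other.
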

\begin{proof}
For each $3 \leq k \leq n$, we construct the identity $u_k = v_k$ which holds in $M_k(\T)$ using Theorem~\ref{FullIden}(i), as follows
\begin{align*}
u_k &= (u_{k-1}a)\left[ (q_kr_k)^{t_k}\left[a^{\ol{k}},b^{\ol{k}}\right],(q_kr_k)^{t_k}r_k\left[a^{\ol{k}},b^{\ol{k}}\right]\right] \\
v_k &= (v_{k-1}a)\left[ (q_kr_k)^{t_k}\left[a^{\ol{k}},b^{\ol{k}}\right],(q_kr_k)^{t_k}r_k\left[a^{\ol{k}},b^{\ol{k}}\right]\right].
\end{align*}
\par By expressing $A_{k-1}$ as $a[A_{k-1},B_{k-1}]$, and substituting the definitions of $A_{k-1},B_{k-1}$ and the definition of $u_k$, we have that the following equalities hold for $3 \leq k \leq n$
\begin{align*}
u_{k-1}[A_{k-1},B_{k-1}]A_{k-1}A_k\cdots A_{n-1} &= (u_{k-1}a)[A_{k-1},B_{k-1}]A_k \cdots A_{n-1} \\
&= (u_{k-1}a)\Big[(q_kr_k)^{t_k}\big[A_k^{\ol{k}},B_k^{\ol{k}}\big], \\
&\qquad \qquad (q_kr_k)^{t_k}r_k\big[A_k^{\ol{k}},B_k^{\ol{k}}\
]\Big]A_{k}\cdots A_{n-1} \\
&=u_k[A_k,B_k]A_k\cdots A_{n-1}.
\end{align*}
where the product $A_{k}\cdots A_{n-1}$ is taken to be the empty word when $k=n$. Similarly, it can be easily seen that
\[ v_{k-1}[A_{k-1},B_{k-1}]A_{k-1}\cdots A_{n-1} = v_k[A_k,B_k]A_k\cdots A_{n-1}. \]
for $3 \leq k \leq n$. So, through the equalities given above, we have that
\begin{align*}
u_2[A_2,B_2]A_2A_3\cdots A_{n-1} &= u_n[A_n,B_n], \text{ and} \\
v_2[A_2,B_2]A_2A_3\cdots A_{n-1} &= v_n[A_n,B_n].
\end{align*}
Thus, as $u_n = v_n$ is an identity satisfied by $M_n(\T)$, we have that the identity $u_2[A_2,B_2]A_2A_3\cdots A_{n-1} =
v_2[A_2,B_2]A_2A_3\cdots A_{n-1}$ is satisfied by $M_n(\T)$.
\end{proof}

\begin{thm}
Let $p$ be a prime. Then there exists an identity satisfied by $M_{p-1}(\T)$ but not by $M_p(\T)$.
\end{thm}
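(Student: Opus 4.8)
The plan is to dispose of $p=2$ and $p=3$ directly and treat $p\ge 5$ by combining Lemma~\ref{induct} with Lemma~\ref{false}. For $p=2$ the semigroup $M_1(\T)$ is commutative while $M_2(\T)$ is not, so $ab=ba$ works. For $p=3$ one simply invokes Lemma~\ref{false} with $n=3$: it supplies $A,B\in M_3(\T)$ and an identity $u_2=v_2$ holding in $M_2(\T)=M_{p-1}(\T)$ with $u_2(A,B)\ne v_2(A,B)$, so $u_2=v_2$ fails in $M_3(\T)=M_p(\T)$.

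So assume $p\ge 5$ and put $n=p-1\ge 4$. First I would manufacture an identity of $M_{p-1}(\T)$ by iterating Theorem~\ref{FullIden}, starting from the $M_2(\T)$-identity $u_2=v_2$ of Lemma~\ref{false}: apply part~(ii) once, at $n=3$, to a $UT_3(\T)$-identity of the form $\phi(ab)\phi=\phi(ba)\phi$ supplied by Corollary~\ref{Zur} (with $\phi=w_3[ab,ba]$), and then apply part~(i) for $k=4,\dots,n$ exactly as in Lemma~\ref{induct}, with $UT_k(\T)$-identities from Corollary~\ref{Zur} and exponents $t_k\ge (k-1)^2+1$. Running the bookkeeping of Lemma~\ref{induct}, the resulting identity of $M_{p-1}(\T)$ has the shape $u_2[P,Q]\,C=v_2[P,Q]\,C$ for explicit words $P,Q,C\in\{a,b\}^*$ in which $P$ and $Q$ are anagrams differing only by interchanging a central factor $A_3^{\ol 3}B_3^{\ol 3}$ with $B_3^{\ol 3}A_3^{\ol 3}$; in particular $|P|_a=|Q|_a$ and $|P|_b=|Q|_b$. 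The second step is to evaluate this identity at $A\in M_p(\T)$ the permutation matrix of a fixed $p$-cycle $\sigma$ (all finite entries $0$) and at a diagonal $B\in M_p(\T)$ with generic real entries.

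The structural point is that $\ol k=\mathrm{lcm}\{1,\dots,k\}$ is coprime to $p$ for every $k\le p-1$, so the powers of $\sigma$ occurring in the construction never collapse. Hence every matrix produced in the evaluation is an invertible weighted permutation matrix whose underlying permutation is a power of $\sigma$; since $B$ is diagonal, that power is governed by the $a$-count of the relevant word modulo $p$, so $P(A,B)$ and $Q(A,B)$ are weighted permutation matrices with common underlying permutation $\sigma^{|P|_a}$ and $C(A,B)$ is a product of invertible matrices, hence cancellative. By choosing the $t_k$ and the lengths $l(w_k)$ so that a handful of integers built from them are coprime to $p$, one arranges $|P|_a\not\equiv 0\pmod p$, so $\sigma^{|P|_a}$ is again a $p$-cycle. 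After cancelling $C(A,B)$ and using that for invertible matrices the Lemma~\ref{false} identity $u_2=v_2$ reduces to $X^2Y^2=Y^2X^2$, the identity fails at $(A,B)$ precisely when $P(A,B)^2Q(A,B)^2\ne Q(A,B)^2P(A,B)^2$.

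The hard part is securing this last non-commutation. Because $P(A,B)$ and $Q(A,B)$ have the same underlying permutation and the same multiplicities for the entries of $A$ (the $a$'s of any word carry $a$-prefix-indices $0,1,2,\dots$, so these contributions agree on the two sides), the commutator is controlled by a single \emph{defect} function $D\colon\mathbb{Z}/p\to\mathbb{Z}$ recording the difference, between $P$ and $Q$, of the distribution modulo $p$ of $a$-prefix-counts over the positions of $b$'s: one checks that $P(A,B)^2Q(A,B)^2=Q(A,B)^2P(A,B)^2$ forces $D\equiv 0$, whereas if $D\not\equiv 0$ then, since $|P|_b=|Q|_b$ and $\sigma^{|P|_a}$ is a $p$-cycle, a generic choice of the diagonal entries of $B$ gives the required inequality. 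The obstacle is exactly that $\ol p=p\cdot\ol{p-1}$: forcing $\sigma^{|P|_a}$ to be a $p$-cycle forces the letter-counts of $P$ and $Q$ that a naive counting argument would use to vanish modulo $p$, so $D$ must be analysed more finely. Concretely $D$ depends only on $A_3,B_3$ and on $a_3=|A_3|_a$, $b_3=|B_3|_a$ via $D(s)=\bigl(f(s)-f(s-\ol 3\,b_3)\bigr)+\bigl(g(s-\ol 3\,a_3)-g(s)\bigr)$, where $f,g$ record the $a$-prefix-count distributions of the $b$'s in $A_3^{\ol 3},B_3^{\ol 3}$; equivalently, in $\mathbb{Z}[\mathbb{Z}/p]$, $D\not\equiv 0$ iff $(1-X^{\ol 3\,b_3})F\ne(1-X^{\ol 3\,a_3})G$ for the generating functions $F,G$ of $f,g$. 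I expect one must track these distributions through the levels of the construction — exploiting that $B_3$ extends $A_3$ by the single appended block $r_4[A_4^{\ol 4},B_4^{\ol 4}]$, which breaks the symmetry between the shifts $\ol 3\,a_3$ and $\ol 3\,b_3$ — and perhaps to seed the recursion at level $3$ or $4$ with an explicit choice verified by a short computation, as was done for $M_3(\T)$ versus $M_4(\T)$. Once $D\not\equiv 0$ is in hand, the constructed identity holds in $M_{p-1}(\T)$ but is falsified in $M_p(\T)$, proving the theorem.
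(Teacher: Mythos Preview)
Your outline for $p=2,3$ matches the paper, and your instinct to combine Lemma~\ref{induct} with Lemma~\ref{false} for $p\ge 5$ is the right one. But the ``hard part'' you describe is a genuine gap: you never establish $D\not\equiv 0$, and the sketch you give (tracking distributions through the levels, seeding with an explicit computation) is speculation, not an argument. Without it, you have not produced matrices falsifying the identity in $M_p(\T)$.

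The paper sidesteps this difficulty entirely, and the mechanism is worth seeing. Rather than fixing the $UT_k(\T)$-identities $q_k=r_k$ in advance and then analysing the resulting combinatorics, the paper exploits the freedom to \emph{choose} each $q_k=r_k$. Any identity of $UT_k(\T)$ may be padded on the right by a power of $a$ and still remain an identity, so one arranges $|q_k|_a=|r_k|_a\equiv -1\pmod p$. Taking the common exponent $t=(p^3-1)/2$, one then has $|(q_kr_k)^t|_a\equiv 1\pmod p$ and $|(q_kr_k)^t r_k|_a\equiv 0\pmod p$. With the inductive hypothesis that $B_k(X,Y)$ is diagonal (so $|B_k|_a\equiv 0$) and $A_k(X,Y)$ has underlying $p$-cycle (so $|A_k|_a\not\equiv 0$), a two-line count gives $|A_{k-1}|_a\equiv \ol{k}\,|A_k|_a\not\equiv 0$ and $|B_{k-1}|_a\equiv 0\pmod p$, so the invariant persists. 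A further padding by powers of $b$ (which does not affect the $a$-count) ensures the diagonal entries of $B_{k-1}(X,Y)$ are pairwise distinct. At level $2$ one then has exactly the hypotheses of Lemma~\ref{false}: $\cA_2$ a $p$-cycle, $\cB_2$ diagonal and non-scalar. No defect function, no generating-function identity in $\mathbb{Z}[\mathbb{Z}/p]$, and no case analysis is needed. Your attempt to secure $|P|_a\not\equiv 0\pmod p$ by tuning $t_k$ and $l(w_k)$ is aiming at the wrong target; the decisive freedom is in the $a$-count of $r_k$ itself, and once $\cB_2$ is forced to be diagonal the non-commutation $\cA_2^2\cB_2^2\ne \cB_2^2\cA_2^2$ is immediate from Lemma~\ref{false}.
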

\begin{proof}
As matrix multiplication is not commutative for $n > 1$, $ab = ba$ is satisfied by $M_1(\T)$ but not by $M_2(\T)$ and
by Lemma~\ref{false} there exists an identity satisfied by $M_2(\T)$ but not by $M_3(\T)$.
\par Suppose then that $p$ is a prime greater than 3. Let $X \in M_p(\T)$ be a permutation matrix of a $p$-cycle $\sigma$ and $Y \in M_p(\T)$ be an invertible diagonal matrix where all entries on the diagonal are different. 
For each $3 \leq k < p$ choose and identity $q_k = r_k$ satisfied by $UT_k(\T)$ with the property that the number of occurrences of $a$ in each side is congruent to $-1 \mod p$. 
This can clearly be done by starting with any identity satisfied by $UT_k(\T)$, as the letter $a$ must appear the same number of times on each side, and then appending a power of $a$ to both sides on the right.
We let $A_{p-1} = a$, $B_{p-1} = b$ and define words $A_{k-1},B_{k-1}$ for $3 \leq k < p$ recursively, as in Lemma~\ref{induct}, by 
\begin{equation*}
A_{k-1} = (q_kr_k)^{t}\left[A_{k}^{\ol{k}},B_{k}^{\ol{k}}\right] \text{ and } B_{k-1} =(q_kr_k)^{t}r_k\left[A_{k}^{\ol{k}},B_{k}^{\ol{k}}\right]
\end{equation*}
where $t = \frac{p^3-1}{2}$. Note that $t \geq (k-1)^2 + 1$ for every $3 \leq k < p$.
\par For $2 \leq m \leq p-1$, let $\cA_m =A_m(X,Y) \in M_p(\T)$ and $\cB_m = B_m(X,Y) \in M_p(\T)$. We now show $\cA_k$ has the underlying permutation of a $p$-cycle and that we can choose $q_k = r_k$ such that  $\cB_k$ is an invertible diagonal matrix in which all the entries on the diagonal are different for every $2 \leq k \leq p-1$. This is true for $\cA_{p-1}$ and $\cB_{p-1}$ by definition.
So, for induction, suppose it is true for $\cA_k$ and $\cB_k$ and we show it is true in the $k-1$ case.
\par Let $u \in \{a,b\}^*$. As $\cA_k$ and $\cB_k$ are invertible, the matrix $u(\cA_k,\cB_k)$ is also invertible.
Moreover, as the underlying permutation of $\cA_k$ is a $p$-cycle, and $\cB_k$ is a diagonal matrix, it follows that the underlying permutation of $u(\cA_k,\cB_k)$ depends only on the number of occurrences of $a$ in $u$ modulo $p$.
If ${|u|}_a \equiv 0 \mod p$, then $u(\cA_k,\cB_k)$ is a diagonal matrix; otherwise it is a $p$-cycle as $\sigma$ is a $p$-cycle and $p$ is prime, so $\sigma^s$ is a $p$-cycle unless $p$ divides $s$.
\par Letting ${|A_k|}_a = n$, we can see that
\begin{align*}
{|A_{k-1}|}_a = {|(q_kr_k)^{t}|}_a {|A_k^{\ol{k}}|}_a &\equiv t{|q_kr_k|}_a n\ol{k} \equiv \frac{p^3-1}{2}(-2)n\ol{k} \equiv n\ol{k} \mod p \\
{|B_{k-1}|}_a = {|(q_kr_k)^{t}r_k|}_a {|A_k^{\ol{k}}|}_a &\equiv {|(q_kr_k)^{t}|}_a n\ol{k} + {|r_k|}_a n \ol{k} \equiv n\ol{k} -n\ol{k}\equiv 0 \mod p
\end{align*}
Thus, as $p$ does not divide $n$ as $\cA_k$ is a $p$-cycle, and $p$ does not divide $\ol{k}$, $p$ does not divide $n\ol{k}$ and hence $\cA_{k-1}$ has the underlying permutation of a $p$-cycle and $\cB_{k-1}$ is a diagonal matrix as required. If not every entry on the diagonal of $\cB_{k-1}$ is different, then we can replace our choice of the identity $q_k = r_k$ 
by multiplying both sides on the right by a sufficiently high power of $b$ we can ensure that all entries are different. We can see this, as each additional $b$ adds a $(\cB_k^{\ol{k}})_{ii}$ to $(\cB_{k-1})_{ii}$ and by assumption every $(\cB_k^{\ol{k}})_{ii}$ is different, so after sufficiently many $b$'s every entry on the diagonal will be different.
\par Therefore, we have shown that $\cA_{k-1}$ has the underlying permutation of a $p$-cycle and $\cB_{k-1}$ is a diagonal matrix which is not the scaling of the identity matrix. 
So, by induction, $\cA_2$ has the underlying permutation of a $p$-cycle and $\cB_2$ is a diagonal matrix which is not a scaling of the identity. Therefore, if we let $u_2 = v_2$ be the identity satisfied by $M_2(\T)$ given by Lemma~\ref{false}, $u_2(\cA_2,\cB_2) \neq v_2(\cA_2,\cB_2)$ and thus
\[
u_2(\cA_2,\cB_2)\cA_2\cdots \cA_{p-2} \neq v_2(\cA_2,\cB_2)\cA_2\cdots \cA_{p-2}
\]
as $\cA_2,\dots,\cA_{p-2} \in M_p(\T)$ are invertible matrices and therefore cancellative. However, by Lemma~\ref{induct},
\[u_2[A_2,B_2]A_2\cdots A_{p-2} = v_2[A_2,B_2]A_2\cdots A_{p-2} \] is an identity satisfied by $M_{p-1}(\T)$ and so we have constructed an identity satisfied by $M_{p-1}(\T)$ that is falsified by $X,Y \in M_p(\T)$.
\end{proof}

\section{Plactic Monoid} \label{PlacticSec}
In this section we show the plactic monoid of rank 4, $\Pb_4$, does not generate the same variety as $UT_5(\T)$. To do this we will use the faithful tropical representation of $\Pb_4$ given in \cite{JKPlactic}. We begin by recalling some notation used in the definition of this representation. We define $[n]:= \{1,\dots,n\}$. For $S,T \in 2^{[n]}$, we write $S^i$ for the $i$th smallest element of $S$, and say $S \leq T$ if $|S| \geq |T|$ and $S^i \leq T^i$ for each $i \leq |T|$. Moreover, for $P,Q \in 2^{[n]}$, we write $[P,Q]$ for the order interval from $P$ to $Q$, and $\cup[P,Q]$ for the union of sets in the order interval. 
\par The following theorem is given in greater generality in \cite{JKPlactic}, but we only require $n=4$ in what follows.
\begin{thm}\cite[Theorem 2.8]{JKPlactic} \label{placmap}
There exists a faithful semigroup morphism $\rho: \Pb_4 \rightarrow UT_{2^{[4]}}(\T)$, where
\[ \rho(x)_{P,Q} = 
\begin{cases}
-\infty &\text{if } |P| \neq |Q| \text{ or } P \nleq Q; \\
1 &\text{if } |P| = |Q| \text{ and } x \in \cup[P,Q];\\
0 &\text{otherwise}.
\end{cases}\]
for each generator $x \in \Pb_4$, extending multiplicatively for products of generators and defining the identity element $e$ as
\[ 
\rho(e) =
\begin{cases}
-\infty &\text{if } |P| \neq |Q| \text{ or } P \nleq Q \\
0 &otherwise.
\end{cases}
\]
\end{thm}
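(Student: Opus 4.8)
The plan is to check three things about the map $\rho$ specified on the generators $1,2,3,4$: that it respects the Knuth relations, so that it extends to a well-defined map on $\Pb_4$; that this extension is a morphism, with $\rho(e)$ playing the role of the identity; and that it is injective. Because $\rho$ is defined on generators and extended multiplicatively, the morphism property is essentially automatic once well-definedness is known, the only extra point being that $\rho(e)$ is idempotent (the order $\le$ on $2^{[4]}$ is reflexive and transitive) and satisfies $\rho(e)\rho(x)=\rho(x)=\rho(x)\rho(e)$ for every generator $x$ — this holds because enlarging $P$ to some $R$ with $P\le R\le Q$ only shrinks $\cup[R,Q]$, so the relevant tropical maximum is attained at $R=P$ — whence $\rho(e)$ is a two-sided identity on the subsemigroup generated by the $\rho(x)$. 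The substantive points are therefore well-definedness and faithfulness.

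Both rest on the path description of products of the $\rho(x)$. Since every $\rho(x)$ is supported on pairs $(P,Q)$ with $|P|=|Q|$ and $P\le Q$, a product $\rho(x_1\cdots x_m)$ is block-diagonal by cardinality, and for $|P|=|Q|=k$ its $(P,Q)$-entry equals the maximum over chains $P=P_0\le P_1\le\cdots\le P_m=Q$ of $k$-subsets of $[4]$ of the number of indices $t$ with $x_t\in\cup[P_{t-1},P_t]$; this is the max-weight-path reading of tropical matrix multiplication for these matrices. For well-definedness I would use this to verify $\rho(bca)=\rho(bac)$ for $a<b\le c$ and $\rho(cab)=\rho(acb)$ for $a\le b<c$, which amounts to checking, for each equal-size pair $P\le Q$ of subsets of $[4]$, that two length-$3$ chain-optimisation problems have the same optimum. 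This is a finite check; I would either isolate a clean lemma describing $\cup[P,Q]$ as an explicit union of integer intervals (read off from $P^i$, $Q^i$ and their neighbours) and compare the optima directly, or, since the blocks have sizes $1,4,6,4,1$, simply carry out the relevant $16\times 16$ tropical products.

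For faithfulness I would upgrade the chain description to a Greene-type formula: I expect $\rho(w)_{P,Q}$, for $|P|=|Q|=k$, to equal the maximum of $l(\tau_1)+\cdots+l(\tau_k)$ over $k$-tuples of pairwise disjoint weakly increasing subsequences $\tau_1,\dots,\tau_k$ of $w$ with each $\tau_i$ using only letters in the interval $[P^i,Q^i]$, the precise shape of the constraint being dictated by the order structure on $2^{[4]}$. By the theorems of Schensted and Greene, the unconstrained such maxima recover the partial sums $\lambda_1+\cdots+\lambda_k$ of the shape of the insertion tableau of $w$, and restricting the letters to sub-intervals of $[4]$ computes the analogous invariants of the letter-interval subwords of $w$. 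The aim is then to show that, as $(P,Q)$ runs over all admissible pairs, these restricted invariants determine the insertion tableau of $w$ — equivalently, that they determine, for every $v\le 4$, the shape of the subtableau with entries $\le v$, and that these restricted shapes reconstruct the full semistandard tableau. Since two words over $\{1,2,3,4\}$ represent the same element of $\Pb_4$ exactly when their insertion tableaux coincide, this yields injectivity of $\rho$.

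The main obstacle is the faithfulness step, and it has two delicate parts. The first is proving the Greene-type formula itself: one must check that the pointer ordering built into a chain of $k$-subsets costs nothing beyond disjointness of the $\tau_i$, i.e. that an optimal family of disjoint weakly increasing subsequences confined to the intervals $[P^i,Q^i]$ can always be realised by an honest chain from $P$ to $Q$. The second is the reconstruction step: since $P$ and $Q$ must be genuine strictly increasing tuples, one cannot make all the intervals $[P^i,Q^i]$ equal, so the shape of the subtableau on $\{1,\dots,v\}$ is not a single matrix entry but must be assembled from several entries across different cardinalities $k$ and different admissible $(P,Q)$; verifying that this assembled data always suffices to pin down each restricted row length is the crux of the argument.
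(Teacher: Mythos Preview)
The paper does not prove this theorem: it is quoted verbatim as \cite[Theorem~2.8]{JKPlactic} and used as a black box, so there is no ``paper's own proof'' to compare against. Your proposal is therefore not competing with anything in this paper; it is a sketch of how the cited result of Johnson and Kambites might be established.

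As a sketch of the original argument your outline is broadly in the right spirit --- well-definedness via the Knuth relations and faithfulness via a Greene-type invariant are exactly the two substantive points, and your path/chain description of the matrix entries is the correct starting object. That said, two of the ``obstacles'' you flag are genuine and your plan for them is vague enough that it does not yet constitute a proof. First, the claimed identification of $\rho(w)_{P,Q}$ with a maximum over $k$-tuples of disjoint increasing subsequences confined to intervals $[P^i,Q^i]$ is not quite right as stated: the chain condition on $k$-subsets does not decouple into independent interval constraints, and the correct combinatorial interpretation (in the Johnson--Kambites paper) goes through a more careful bijection with lattice paths/semistandard tableaux rather than a direct Greene formula with interval-restricted letters. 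Second, your reconstruction step --- recovering the full tableau from the collection of $\rho(w)_{P,Q}$ --- is asserted rather than argued, and this is precisely where the work lies; you would need to exhibit, for each cell of the tableau, a specific entry or combination of entries of $\rho(w)$ that reads off its content. Without these two pieces made precise the proposal remains a plausible strategy rather than a proof.
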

Note that for all $x \in \Pb_4$, $\rho(x)$ is a block matrix where the largest block is of size $6$ by $6$ and all simple paths in $G_{\rho(x)}$ have length at most 4.
\begin{lem} \label{tech2}
Let $X,Y \in UT_m(\T)$ and $u = v$ be an identity satisfied by $UT_n(\T)$ where $n \leq m$. If there exists a path in $G_{X,Y}$ of simple length less than or equal to $n-1$ of maximal weight among all paths from $i$ to $j$ labelled $u$, then $(u(X,Y))_{ij} \leq (v(X,Y))_{ij}$.
\end{lem}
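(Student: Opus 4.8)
The plan is to pass to an $n\times n$ submatrix. The hypothesis hands us a path $\gamma$ from $i$ to $j$ labelled $u$ whose weight realises $(u(X,Y))_{ij}$ and which, having simple length at most $n-1$, visits few nodes. Since $X,Y$ are upper triangular, every non-loop edge of $G_{X,Y}$ points strictly upward, so along any path the node sequence is non-decreasing; hence each non-loop edge of $\gamma$ is traversed exactly once and these edges form a strictly increasing chain $i=g_0<g_1<\cdots<g_s=j$ with $s\le n-1$, while every loop of $\gamma$ sits at one of the $g_t$. Thus the node set $S$ of $\gamma$ has $|S|=s+1\le n$. Because $n\le m$, I can enlarge $S$ to a set $S'\subseteq\{1,\dots,m\}$ with $|S'|=n$, list $S'$ in increasing order, and let $X',Y'\in UT_n(\T)$ be the submatrices of $X,Y$ on the rows and columns indexed by $S'$ (upper-triangularity is inherited since $S'$ carries the induced order). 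Write $i',j'$ for the positions of $i,j$ within $S'$.

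The key step is the observation that for every word $w\in\{a,b\}^+$ the entry $(w(X',Y'))_{i'j'}$ equals the maximum weight of a path in $G_{X,Y}$ labelled $w$ from $i$ to $j$ all of whose nodes lie in $S'$; this follows by induction on $l(w)$ straight from the definition of tropical matrix multiplication, since $X',Y'$ retain exactly the edges of $G_{X,Y}$ between nodes of $S'$, with the same weights. Two consequences: first, cutting the maximum down to a subclass of paths gives
\[ (w(X',Y'))_{i'j'}\ \le\ (w(X,Y))_{ij}\quad\text{for every } w\in\{a,b\}^+; \]
second, the distinguished path $\gamma$ has all its nodes in $S\subseteq S'$ and is labelled $u$, so it is one of the paths competing in $(u(X',Y'))_{i'j'}$, and since its weight is $(u(X,Y))_{ij}$ we get the equality $(u(X',Y'))_{i'j'}=(u(X,Y))_{ij}$.

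Finally, as $u=v$ is satisfied by $UT_n(\T)$ and $X',Y'\in UT_n(\T)$, we have $u(X',Y')=v(X',Y')$, so in particular $(u(X',Y'))_{i'j'}=(v(X',Y'))_{i'j'}$. Chaining the three relations yields
\[ (u(X,Y))_{ij}\ =\ (u(X',Y'))_{i'j'}\ =\ (v(X',Y'))_{i'j'}\ \le\ (v(X,Y))_{ij}, \]
which is the claim. (If there is no path from $i$ to $j$ labelled $u$ at all, then $(u(X,Y))_{ij}=-\infty$ and the inequality is trivial, so the hypothesis that a maximal-weight path exists is costless.)

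The one place needing genuine care is the key observation together with its use on the $u$-side: one must check that enlarging $S$ to $S'$ does not disturb $\gamma$, so that $\gamma$ really is captured by the submatrix product $(u(X',Y'))_{i'j'}$ and the $u$-side inequality upgrades to an equality. Everything else — the upper-triangular structure forcing at most $n$ nodes, and the path interpretation of tropical matrix products — is routine bookkeeping.
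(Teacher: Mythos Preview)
Your proof is correct and follows essentially the same approach as the paper: restrict to the principal submatrices on the nodes visited by the maximal-weight path $\gamma$, apply the identity there, and bound back into the full matrices. The only cosmetic difference is that the paper keeps the submatrices at size $k+1\le n$ and invokes the fact that an identity of $UT_n(\T)$ descends to $UT_{k+1}(\T)$, whereas you pad the index set up to size exactly $n$; these are equivalent manoeuvres.
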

\begin{proof}
Let $\gamma$ be a path of maximal weight labelled $u$ of simple length $k \leq n-1$ from $i$ to $j$. Let $\ol{X},\ol{Y} \in UT_{k+1}(\T)$ be the matrices obtained from $X$ and $Y$ by removing rows and columns not indexed by nodes visited by $\gamma$, with the rows and columns labelled by their original labelling. Clearly, $\gamma$ is a path in $G_{\ol{X},\ol{Y}}$ having maximal weight among all paths from $i$ to $j$ labelled by $u$, and so we have that $(u(X,Y))_{ij} = (u(\ol{X},\ol{Y}))_{ij}$. Moreover, we have that
\[(u(X,Y))_{ij} = (u(\ol{X},\ol{Y}))_{ij} = (v(\ol{X},\ol{Y}))_{ij} \leq (v(X,Y))_{ij} \]
where the second equality holds since $u=v$ is an identity satisfied by $UT_n(\T)$ and hence also for $UT_{k+1}(\T)$ for all $k+1 \leq n$, and where the inequality follows from the construction of $\ol{X}$ and $\ol{Y}$.
\end{proof}

\begin{thm} \label{plactic}
Let $u,v \in \{a,b\}^*$ be such that $u[ab,ba] = v[ab,ba]$ is a semigroup identity that holds in $UT_4(\T)$. Then the identity $abuab[ab,ba] = abvab[ab,ba]$ is satisfied by $\Pb_4$.
\end{thm}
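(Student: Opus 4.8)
The plan is to argue through the faithful representation $\rho\colon\Pb_4\to UT_{2^{[4]}}(\T)$ of Theorem~\ref{placmap}. Fix $P,Q\in\Pb_4$ and set $M=\rho(P)$, $N=\rho(Q)$; since $\rho$ is a faithful morphism it suffices to prove $W_1(M,N)=W_2(M,N)$, where $W_1:=abuab[ab,ba]$ and $W_2:=abvab[ab,ba]$, read as words in $\{a,b\}^+$. The first observation is that $abuab[ab,ba]=(ab)[ab,ba]\cdot u[ab,ba]\cdot(ab)[ab,ba]=abba\cdot\bigl(u[ab,ba]\bigr)\cdot abba$, so the word identity $W_1=W_2$ is obtained from the hypothesised identity $u[ab,ba]=v[ab,ba]$ just by left- and right-multiplying by the fixed word $abba$; hence $W_1=W_2$ is itself an identity of $UT_4(\T)$.

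The representation is block diagonal for the partition of $2^{[4]}$ by cardinality, so $W_1(M,N)$ and $W_2(M,N)$ are block diagonal with blocks indexed by the posets $\mathcal{P}_c=\{S\subseteq[4]:|S|=c\}$, of sizes $1,4,6,4,1$, and it is enough to compare the two matrices block by block. On $\mathcal{P}_c$ the digraph $G_{M,N}$ has an edge (a loop, or a non-loop edge) from $S$ to $T$ precisely when $S\le T$, so a simple path in it runs along a chain of $(\mathcal{P}_c,\le)$. For $c\ne2$ every chain of $\mathcal{P}_c$ has at most $4$ elements, and $(\mathcal{P}_2,\le)$ is graded of rank $4$ with the property that its only $5$-element chains run from its minimum $\{1,2\}$ to its maximum $\{3,4\}$. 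So every path in $G_{M,N}$ labelled $W_1$ or $W_2$ between two nodes has simple length at most $3$, except possibly a path from $\{1,2\}$ to $\{3,4\}$ inside the $6$-dimensional block.

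For any entry $(S,S')$ other than $(\{1,2\},\{3,4\})$ of the $\mathcal{P}_2$-block (and for every entry of the smaller blocks), a maximal-weight path labelled $W_1$ from $S$ to $S'$ therefore has simple length at most $3=n-1$ with $n=4$; applying Lemma~\ref{tech2} to the identity $W_1=W_2$ of $UT_4(\T)$, and once more with the roles of $W_1$ and $W_2$ interchanged, gives $\bigl(W_1(M,N)\bigr)_{S,S'}=\bigl(W_2(M,N)\bigr)_{S,S'}$. Thus the whole statement reduces to the single entry $\bigl(W_1(M,N)\bigr)_{\{1,2\},\{3,4\}}=\bigl(W_2(M,N)\bigr)_{\{1,2\},\{3,4\}}$, and this is exactly why $u$ and $v$ are wrapped in the extra $ab$'s: without the wrapping the representation argument would stall here.

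To finish, I would write $W_i(M,N)=(MNNM)\,\Phi_i(M,N)\,(MNNM)$, where $\Phi_1=u[ab,ba]$ and $\Phi_2=v[ab,ba]$, so that
\[
\bigl(W_i(M,N)\bigr)_{\{1,2\},\{3,4\}}=\max_{\{1,2\}\le T\le T'\le\{3,4\}}\Bigl[(MNNM)_{\{1,2\},T}+\bigl(\Phi_i(M,N)\bigr)_{T,T'}+(MNNM)_{T',\{3,4\}}\Bigr].
\]
By the previous step $\Phi_1(M,N)$ and $\Phi_2(M,N)$ already agree at every index pair except $(\{1,2\},\{3,4\})$, and since $\{1,2\},\{3,4\}$ are the extreme points of $\mathcal{P}_2$ the term $(T,T')=(\{1,2\},\{3,4\})$ is the only one in which $W_1$ and $W_2$ can differ; moreover $(MNNM)_{\{1,2\},\{1,2\}}$ and $(MNNM)_{\{3,4\},\{3,4\}}$ are forced to be the loop values $2\bigl(M_{\{1,2\},\{1,2\}}+N_{\{1,2\},\{1,2\}}\bigr)$ and $2\bigl(M_{\{3,4\},\{3,4\}}+N_{\{3,4\},\{3,4\}}\bigr)$. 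This reduces the entry to an identity of the shape $\max(\mathcal{R}_1,\mathcal{L})=\max(\mathcal{R}_2,\mathcal{L})$ with $\mathcal{L}$ common. One then argues by cases on the simple length of an optimal path labelled $u[ab,ba]$ from $\{1,2\}$ to $\{3,4\}$: if it is at most $3$ then $\mathcal{R}_1=\mathcal{R}_2$ by a further application of Lemma~\ref{tech2}; if it equals $4$ (so that this optimal path must traverse a full $5$-chain and the flanking $ab$-blocks contribute only loops at $\{1,2\}$ and $\{3,4\}$) one uses the monotonicity of the entries of $\rho$-image matrices along $\le$ to reroute a step of one flanking block into the chain, producing a competing term with $(T,T')\ne(\{1,2\},\{3,4\})$ that dominates $\mathcal{R}_i$; in either case both sides coincide. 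I expect this last entry to be the real difficulty of the proof: everywhere else the argument is block structure, the length bound for chains in $\mathcal{P}_2$, and Lemma~\ref{tech2}, but the pair $(\{1,2\},\{3,4\})$ is precisely where the $6$-dimensional block is not controlled by $UT_4$-identities, and it is tamed only by the presence of the extra $ab$ on each side.
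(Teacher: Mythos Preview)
Your framework is right: use $\rho$, observe the block structure, and apply Lemma~\ref{tech2} to every entry except $(\{1,2\},\{3,4\})$; the reduction of that entry to an equation of the shape $\max(\mathcal{R}_1,\mathcal{L})=\max(\mathcal{R}_2,\mathcal{L})$ with $\mathcal{L}$ common is also correct. The gap is in the last step, where you invoke ``monotonicity of the entries of $\rho$-image matrices along $\le$''.

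No such monotonicity exists. For example $M_{\{1,2\},\{1,2\}}=|x|_1+|x|_2$ while $M_{\{1,3\},\{1,3\}}=|x|_1+|x|_3$, and there is no reason for $|x|_3\ge|x|_2$. What the paper uses instead is the specific counting identity
\[
\rho(s)_{\{1,2\},\{1,2\}}+\rho(s)_{\{3,4\},\{3,4\}}=\rho(s)_{\{1,3\},\{1,3\}}+\rho(s)_{\{2,4\},\{2,4\}}
\]
(both sides equal $|s|_1+|s|_2+|s|_3+|s|_4$), which forces \emph{at least one} of
\[
\rho(s)_{\{1,3\},\{1,3\}}\ge\rho(s)_{\{1,2\},\{1,2\}}\quad\text{or}\quad \rho(s)_{\{2,4\},\{2,4\}}\ge\rho(s)_{\{3,4\},\{3,4\}}
\]
to hold. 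This is a dichotomy, not a monotonicity, and the argument splits on which alternative holds: in the first case the extra $ab$ on the \emph{left} absorbs the edge $\{1,2\}\to\{1,3\}$, in the second case the extra $ab$ on the \emph{right} absorbs the edge $\{2,4\}\to\{3,4\}$.

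There is a second subtlety in your set-up. The paper works in $G_{XY,YX}$ (with $X=\rho(x)$, $Y=\rho(y)$), not in your $G_{M,N}$, and this is essential for the rerouting. In $G_{XY,YX}$ the two loops at any vertex $P$ (labelled $XY$ and $YX$) have the \emph{same} weight $M_{PP}+N_{PP}$; so once the dichotomy gives, say, the $\{1,3\}$--versus--$\{1,2\}$ inequality, it holds for \emph{every} label simultaneously. That is exactly what lets one slide the first non-loop edge $\{1,2\}\to\{1,3\}$ of a maximal path back into the first two positions (the $ab$ prefix supplies both labels), after which the tail from some $P\ge\{1,3\}$ to $\{3,4\}$ has simple length at most $3$ and Lemma~\ref{tech2} finishes. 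In your $G_{M,N}$ the inequality $M_{\{1,3\},\{1,3\}}\ge M_{\{1,2\},\{1,2\}}$ need not hold for $M$ alone (nor for $N$ alone), so rerouting a \emph{single} step of the flanking $MNNM$ is not justified; at best you could move an $MN$ or $NM$ pair at once, but then you must also control what happens to the loops inside $\Phi_i$, and your sketch does not do this. Switching to $G_{XY,YX}$ and using the counting identity above is the missing ingredient.
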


\begin{proof}
Let $\rho: \Pb_4 \rightarrow UT_{2^{[4]}}(\T)$ be the morphism given in Theorem~\ref{placmap}, and let $x,y \in \Pb_4$.
Recall that $\rho$ is faithful and the semigroup identity $abuab[ab,ba] = abvab[ab,ba]$ is satisfied by $UT_4(\T)$. 
Let $X = \rho(x), Y= \rho(y)$ and consider $abuab(XY,YX)$ and $abvab(XY,YX)$. Note that for all $(P,Q) \neq (\{1,2\},\{3,4\})$ a path from $P$ to $Q$ in $G_{X,Y}$ has simple length at most 3. Hence we may apply Lemma~\ref{tech2} (in both directions) to obtain $abuab(XY,YX)_{P,Q} = abvab(XY,YX)_{P,Q}$ for all $(P,Q) \neq (\{1,2\},\{3,4\})$.
Now by the fact that $\rho$ is faithful, we have that
\begin{multline} \label{first}
abuab(xy,yx) = abvab(xy,yx) \text{ if and only if } \\ (abuab(XY,YX))_{\{1,2\},\{3,4\}} = (abvab(XY,YX))_{\{1,2\},\{3,4\}}.
\end{multline}
Therefore, it suffices to check that $abuab[ab,ba]=abvab[ab,ba]$ holds for the $\{1,2\},\{3,4\}$ entry in the image of $\rho$.

\par By the definition of $\rho$, we have that for $s \in \Pb_4$, $\rho(s)_{P,P}$ is the total number of occurrences of letters from the set $P$ in some fixed word representing $s$. It follows from this that
\[ \rho(s)_{\{1,2\},\{1,2\}}+\rho(s)_{\{3,4\}\{3,4\}} = \rho(s)_{\{1,3\},\{1,3\}}+\rho(s)_{\{2,4\},\{2,4\}} \]
for all $s \in \Pb_4$ as all words representing $s$ have the same number of 1's, 2's, 3's and 4's and so both sides of the equality counter the number of occurrences of 1's, 2's, 3's and 4's in some word representing $s$.
Then for each $s$ we have that, either
\[ \rho(s)_{\{1,3\},\{1,3\}} \geq \rho(s)_{\{1,2\},\{1,2\}} \text{ or }\rho(s)_{\{2,4\},\{2,4\}} \geq \rho(s)_{\{3,4\}\{3,4\}}.\]
\par We now look at the graph $G_{XY,YX}$. This is the graph where there is an edge from $i$ to $j$ labelled $XY$ and weighted by $\rho(xy)_{ij}$, and an edge from $i$ to $j$ labelled $YX$ and weighted by $\rho(yx)_{ij}$.
Note $\rho(xy)_{\{1,3\},\{1,3\}} \geq \rho(xy)_{\{1,2\},\{1,2\}}$ if and only if $\rho(yx)_{\{1,3\},\{1,3\}} \geq \rho(yx)_{\{1,2\},\{1,2\}}$ as $\rho(xy)$ and $\rho(yx)$ have the same diagonal entries. Suppose that $\rho(xy)_{\{1,3\},\{1,3\}} \geq \rho(xy)_{\{1,2\},\{1,2\}}$, and let $\gamma$ be a path of maximal weight in $G_{XY,YX}$ from $\{1,2\}$ to $\{3,4\}$ labelled by the word $abuab[XY,YX]$. 
\par We split into two cases
\begin{enumerate}[(i)]
\item If $\gamma$ does not contain an edge from $\{1,2\}$ to $\{1,3\}$. Then, $\gamma$ is a path of simple length $\leq 3$, so by Lemma~\ref{tech2} $(abuab(XY,YX))_{\{1,2\},\{3,4\}} \leq (abvab(XY,YX))_{\{1,2\},\{3,4\}}$.
\item If $\gamma$ contains an edge from $\{1,2\}$ to $\{1,3\}$. Then $\gamma$ is of the form
\[ \gamma = \lambda_{\{1,2\}} \circ \gamma_{\{1,2\},\{1,3\}} \circ \mu \]
where $\lambda_{\{1,2\}}$ is a path made up of loop edges around node $\{1,2\}$, $\gamma_{\{1,2\},\{1,3\}}$ is the subpath of $\gamma$ corresponding to an edge from $\{1,2\}$ to $\{1,3\}$ and $\mu$ is the rest of $\gamma$. Since, we have assumed $\rho(xy)_{\{1,3\},\{1,3\}} \geq \rho(xy)_{\{1,2\},\{1,2\}}$ (and similarly for $\rho(yx)$), each loop at $\{1,3\}$ has greater weight than its counterpart at $\{1,2\}$. Since $\gamma$ is assumed to have maximal weight on the word $abuab[XY,YX]$, this means that the path $\lambda_{\{1,2\}}$ can be assumed to have length at most 1; it has length 0 if $\gamma_{\{1,2\},\{1,3\}}$ is labelled $XY$, and length 1 if $\gamma_{\{1,2\},\{1,3\}}$ is labelled $YX$.
\par Therefore, the edge $\gamma_{\{1,2\},\{1,3\}}$ is contained within the first two edges of $\gamma$ corresponding to the first two letters of $abuab$ and hence by the definition of matrix multiplication in $UT_{2^{[4]}}(\T)$ we have that
\[(abuab(XY,YX))_{\{1,2\},\{3,4\}} = (ab(XY,YX))_{\{1,2\},P} + (uab(XY,YX))_{P,\{3,4\}}\] 
for some $P \geq \{1,3\}$. Moreover, as each such path from $P$ to $\{3,4\}$ (and hence the path of maximal weight) has simple length at most 3, we can apply Lemma~\ref{tech2} to get that
\begin{align*}
(abuab(XY,YX))_{\{1,2\},\{3,4\}} &= (ab(XY,YX))_{\{1,2\},P} + (uab(XY,YX))_{P,\{3,4\}} \\
&\leq (ab(XY,YX))_{\{1,2\},P} + (vab(XY,YX))_{P,\{3,4\}} \\ 
&\leq (abvab(XY,YX))_{\{1,2\},\{3,4\}}
\end{align*}

\end{enumerate}
We can now apply a similar case analysis to a maximal weight path labelled by $abvab[XY,YX]$ to get that 
\[(abuab(XY,YX))_{\{1,2\},\{3,4\}} \geq (abvab(XY,YX))_{\{1,2\},\{3,4\}}.\]
Therefore, $(abuab(XY,YX))_{\{1,2\},\{3,4\}} = (abvab(XY,YX))_{\{1,2\},\{3,4\}}$, and by (\ref{first}) we can conclude that $abuab(xy,yx) = abvab(xy,yx)$.
\par A similar argument in the case where $\rho(xy)_{\{2,4\},\{2,4\}} \geq \rho(xy)_{\{3,4\}\{3,4\}}$, applies to show that $abuab(xy,yx) = abvab(xy,yx)$.
\end{proof}

\begin{cor}
There exists an identity satisfied by $\Pb_4$ not satisfied by $UT_5(\T)$.
\end{cor}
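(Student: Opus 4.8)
The plan is to produce the identity by feeding a suitable identity of $UT_4(\T)$ into Theorem~\ref{plactic} and then falsifying the output in $UT_5(\T)$ via Lemma~\ref{factor}. Concretely, suppose $u[ab,ba] = v[ab,ba]$ is an identity that holds in $UT_4(\T)$; Theorem~\ref{plactic} then hands us the identity $abuab[ab,ba] = abvab[ab,ba]$, which holds in $\Pb_4$. To show this identity fails in $UT_5(\T)$ it suffices, by Lemma~\ref{factor} applied to the words $abuab$ and $abvab$ with $n=4$, to exhibit a word of length $4$ that is a factor of $abuab$ but not of $abvab$.

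For the input I would take exactly the pair of words built in the proof of Theorem~\ref{utvar} in the case $n = 4$: set $u = \overline{w}a\overline{w}$ and $v = \overline{w}b\overline{w}$, where $\overline{w}$ is the word constructed there. By that construction $\overline{w}$ contains every word of length $3$ as a factor, neither $u$ nor $v$ contains $a^4$ or $b^4$, and the word $abab$ (which is $a\tilde{w}$ in the notation used there) is a factor of $u$ but not of $v$; moreover Corollary~\ref{Zur} applies, so $u[ab,ba] = v[ab,ba]$ holds in $UT_4(\T)$ and hence Theorem~\ref{plactic} does yield a genuine $\Pb_4$-identity $abuab[ab,ba] = abvab[ab,ba]$. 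I would stress that the input must actually hold in $UT_4(\T)$, not merely in a smaller $UT_k(\T)$, which is why the $n=4$ instance of the Theorem~\ref{utvar} construction is the right source to draw on rather than, say, the shorter identity of Example~\ref{example}.

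It then remains to check that $w := abab$ is a factor of $abuab$ but not of $abvab$, after which Lemma~\ref{factor} supplies $A,B \in UT_5(\T)$ with $abuab(AB,BA) \neq abvab(AB,BA)$, i.e.\ the $\Pb_4$-identity fails in $UT_5(\T)$, completing the proof. Since $abab$ is already a factor of $u$, it is a factor of $abuab$. For the other half, $abab$ is not a factor of $v = \overline{w}b\overline{w}$ by the proof of Theorem~\ref{utvar}, so the only possible occurrences of $abab$ in $abvab$ would have to meet one of the two appended copies of $ab$; since $\overline{w}$ begins with $\tilde{w} = bab$ and ends with $b^2 a$, prepending $ab$ creates the prefix $abbab\cdots$ while appending $ab$ creates the suffix $\cdots bbaab$, and a direct inspection of the handful of length-$4$ windows that touch the new letters shows that none of them is $abab$. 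The substantive work is all hidden inside the already-established Theorem~\ref{plactic}; the only obstacle left in the argument above is this boundary bookkeeping, and it is precisely to keep it trivial that one chooses a witness word ($abab$) whose leading and trailing two letters clash with the two-letter block being glued on.
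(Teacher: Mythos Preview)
Your argument is correct and follows the same route as the paper: feed a $UT_4(\T)$-identity built via Corollary~\ref{Zur} into Theorem~\ref{plactic}, then use Lemma~\ref{factor} with the witness word $abab$ to falsify the resulting $\Pb_4$-identity in $UT_5(\T)$. The only difference is cosmetic --- the paper uses the shorter explicit seed word $ba^3b^3aba$ in place of the general $\overline{w}$ from the proof of Theorem~\ref{utvar}, but the logic (including the choice of $abab$ as the separating factor and the boundary check at the appended $ab$'s) is identical.
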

\begin{proof}
Let $u= ba^3b^3aba \ b \ ba^3b^3aba$ and $ v= ba^3b^3aba \ a \ ba^3b^3aba$. By Theorem~\ref{Zur}, we have that $u[ab,ba] = v[ab,ba]$ is an identity satisfied by $UT_4(\T)$. So by Theorem~\ref{plactic}, $abuab[ab,ba] = abvab[ab,ba]$ is satisfied by $\Pb_4$. However, $abab$ is a factor of $abuab$ but not of $abvab$. So, by Lemma~\ref{factor}, we have that there exists $A,B \in UT_5(\T)$ such that $abuab(AB,BA) \neq abvab(AB,BA)$, and thus $abuab[ab,ba] = abvab[ab,ba]$ is not satisfied by $UT_5(\T)$.
\end{proof}

\end{document}